\crefname{hypothesis}{Hypothesis}{Hypotheses}
\title{Randomized linear solvers for computational architectures with straggling workers\thanks{Submitted to the editors on June 30, 2024.}}
\title{Straggler-tolerant stationary iterative methods for
linear systems\thanks{Submitted to the editors on October 8, 2024.}}
\author{
Vassilis Kalantzis\thanks{IBM Research, 
Thomas J. Watson Research Center, Yorktown Heights, NY USA 
  (\email{vkal@ibm.com, lhoresh@us.ibm.com}).}
\and Yuanzhe Xi\thanks{Department of Mathematics, Emory University, Atlanta, GA, 
  (\email{yxi26@emory.edu}).} Research supported by NSF  DMS 2208412 and RTG DMS 2038118.
\and Lior Horesh\footnotemark[2]
\and Yousef Saad\thanks{Department of Computer Science and Engineering, University of Minnesota, Minneapolis, MN, 
  (\email{saad@umn.edu}). Research supported by the NSF award DMS 2208456.}
}
\newcommand*{\addFileDependency}[1]{
  \typeout{(#1)}
  \@addtofilelist{#1}
  \IfFileExists{#1}{}{\typeout{No file #1.}}
}
\newcommand*{\myexternaldocument}[1]{%
    \externaldocument{#1}%
    \addFileDependency{#1.tex}%
    \addFileDependency{#1.aux}%
}
\newcommand{\norm}[1]{\left\lVert#1\right\rVert}
\newcommand{\mybinom}[2]{\Bigl(\begin{array}{@{}c@{}}#1\\#2\end{array}\Bigr)}
\begin{document}
\nolinenumbers
\maketitle

\begin{abstract}
In this paper, we consider the iterative solution of linear algebraic equations under the condition that matrix-vector products with the coefficient matrix are computed only partially. At the same time, non-computed entries are set to zeros. We assume that both the number of computed entries and their associated row index set are random variables, with the row index set sampled uniformly given the number of computed entries. This model of computations is realized in hybrid cloud computing architectures following the controller-worker distributed model under the influence of straggling workers. We propose {straggler-tolerant} Richardson iteration scheme and Chebyshev semi-iterative schemes, and prove sufficient conditions for their convergence in expectation. Numerical experiments verify the presented theoretical results as well as the effectiveness of the proposed schemes on a few sparse matrix problems. 
\end{abstract}

\begin{keywords}
  Richardson iteration, Chebyshev iteration, controller-worker architectures, straggling, randomization, cloud computing.
\end{keywords}

\begin{AMS}
  65F08, 65F10, 68M15, 68Q87.
\end{AMS}

\section{Introduction}

The use of on-demand remote computer resources (cloud computing) is becoming increasingly 
a mainstream alternative for solving large-scale scientific problems in businesses and academia due 
to its scalability and cost efficiency \cite{huber2020aiida,rahhali2024parallel,talirz2020materials}. A particular instance of cloud computing, termed hybrid cloud, offers additional flexibility by combining on-premises computing infrastructure with a public cloud formed by remote (non-dedicated) processing elements which are allocated dynamically subject to considerations such as cost and latency \cite{coyne2018ibm}. 
A hybrid cloud generally follows a controller-worker model of asymmetric communication 
where the controller typically resides on the on-premises infrastructure and is responsible for task distribution, synchronization, monitoring, and management of workers, while the workers receive data, perform computations, and send data back to the controller. 

One limitation of controller-worker models implemented on cloud computing infrastructures is 
the phenomenon of straggling \cite{hussain2019sla}. Straggling workers refer to those processes 
that complete their workload significantly slower than their peers and thus delay the overall 
flow of computations \cite{wang2015using}. Specifically, in the context of  iterative solvers, straggling frequently arises during the computation of matrix-vector products that involve the iteration matrix. A simple remedy to this problem is the allocation of a fixed amount of time in which
each worker needs to return its local product otherwise a zero is placed instead \cite{amiri2019computation,pmlr-v238-kalantzis24a}. While this approach reduces idle wait, it 
introduces complexities when using classical iterative subspace solvers because most convergence
analyses for these solvers assume that matrix-vector products are computed exactly up to the 
round-off error. In particular, while the behavior of Krylov iterative solvers with inexact matrix-vector products and/or faults has been studied, e.g., see \cite{bouras2005inexact,bridges2012fault,coleman2017comparison,coleman2021fault,elliott2014evaluating,giraud2007convergence,gratton2019exploiting,hoemmen2011fault,langou2008recovery,sao2013self,shantharam2012fault,simoncini2003theory,simoncini2005occurrence,van2004inexact}, the inability to compute exact matrix-vector products often
results in delayed convergence \cite{agullo2020exploring,sidje2011evaluation,sleijpen2005restarted}.

The above discussion motivates the study of the straggler-tolerant iterative solution of a system of 
linear algebraic equations $Az=v$ on controller-worker architectures subject to the 
constraint that matrix-vector products with the $N\times N$  matrix $A$ are 
almost always computed partially, i.e., only a subset of the entries is returned and the 
omitted entries are set equal to zero. More specifically, we assume that the matrix-vector 
product $Af$ between $A$ and a vector $f\in \mathbb{R}^N$ is replaced with an oracle that 
returns a random set of $T\in \mathbb{N}$ entries of $Af$ indexed by ${\cal T}\subseteq \{1,2,\ldots,N\},\ |{\cal T}|=T$. 
{It is also possible to consider a column-wise distribution of the matrix-vector product, however in this paper we focus on the row-wise model due to its simplicity as well as the fact that the controller only needs to receive at most one scalar as opposed to an $N$-length vector per non-straggler worker.} 

Throughout the rest of this paper, we assume both the number of observed entries $T$ and the corresponding subset ${\cal T}$ of 
observed indices are random variables. We further assume 
that the probability of observing each outcome of ${\cal T}$ is uniform for a given $T$.  
{Though assuming conditional independence of ${\cal T}$ is essential 
to the development of the theoretical framework, we note that 
this assumption can be restricting in practical applications and further studies are 
required to cover the existing knowledge gap. For example, when a worker is consistently 
slower, the associated index might never be included in ${\cal T}$.} 
{Moreover, the analysis presented in this paper  only applies when one worker is responsible for a single entry of each matrix-vector 
product. The more practical scenario where a worker handles multiple contiguous index rows, i.e., the number of workers is less than the dimension $N$, is generally more complex 
and requires a separate study that is left as future work.}

In this paper, we consider the Richardson iteration and Chebyshev semi-iterative schemes, and 
focus on their behavior when classical matrix-vector products are replaced with partial matrix-vector products as outlined above. Our main contributions are summarized as follows: $a$) 
We demonstrate that the expected value of the approximate solution at each iteration of the {straggler-tolerant} Richardson iteration is equal to the iterate produced by the classical Richardson iteration, provided that two specific scalar parameters are used in the {straggler-tolerant} version. Furthermore, we demonstrate that the variance of the iterate of {straggler-tolerant} Richardson iteration generally increases as the iteration number increases. 
$b$) We extend the framework of {straggler-tolerant} Richardson to the stationary Chebyshev semi-iterative method, a form of second-order iteration, and show that the iterates of the {straggler-tolerant} variant are -in expectation- equal to the corresponding iterate produced by the classical variant. 
$c$) {Our numerical experiments illustrate that both the straggler-tolerant Richardson and
Chebyshev semi-iterative methods can converge in expectation to the true solution of
the linear system, and that the hindrance to convergence due to missing contributions
from straggling workers can be indeed mitigated.}

The structure of this paper is as follows. Section \ref{sec2} discusses in greater detail 
the problem of stragglers and introduces our model of computations and its motivation. 
Section \ref{sec4} presents a probabilistic analysis of the convergence of 
{straggler-tolerant} Richardson iteration with partially complete 
matrix-vector products. Section \ref{sec:cheby} proposes 
the {straggler-tolerant} Chebyshev semi-iterative method. Section 
\ref{sec5} presents numerical illustrations and comparisons. Finally, 
Section \ref{sec6} gives our concluding remarks. We denote by $\mathbb{E}$ 
the expectation of a random variable. Also, we denote by $e_i$ the $i$th 
column of the $N\times N$ identity matrix, and $1^N$ the vector of length 
$N$ with all ones. Finally, the $i$th entry of the vector $x$ will be 
denoted by $[x]_i$.

\section{A model for partially complete matrix-vector products} \label{sec2}

The work presented in this paper is mainly motivated by the phenomenon of straggling in 
hybrid cloud computing environments operating under the controller-worker computational 
model. In this section, we define a model for the matrix-vector product realized in the 
presence of straggling workers.

\subsection{The problem of straggling workers}

In the controller-worker model, the controller is responsible for gathering and 
processing the elements produced by the worker entities. Each worker entity (process) 
typically exploits a separate processing element of hardware and executes in parallel and 
independently from the rest of the workers. When all workers require the same amount of 
time to execute their tasks, a controller-worker model can enhance granularity and reduce 
the wall-clock time of an application. In practice each worker generally requires an amount 
of time that varies considerably from other workers, leading to a phenomenon known as 
straggling. Straggling in distributed computing refers to the phenomenon where some workers are unresponsive or take significantly longer to complete their tasks compared to others, thus 
leading to delays in the overall completion time of distributed computations. Such workers 
are known as stragglers \cite{dean2013tail} and they degrade the parallel efficiency of 
distributed systems. 

In numerical linear algebra,  matrix-vector products are commonly performed in parallel to accelerate the execution of iterative solvers for large linear systems \cite{XU2022102956}. Assume under the controller-worker model, the $i$th entry of the $N\times 1$ matrix-vector product $y=Af$ between a $N\times N$ matrix $A$ and a $N\times 1$ vector $f$ is computed by assigning the $i$th worker the computation of the scalar product between the vector $f$ and the $i$th row of $A$. Each worker performs its respective task independently while the controller aggregates the individual scalars produced by each of the $N$ workers. Nonetheless, it is generally impossible to determine a priori how long each worker might execute until it returns its part of the matrix-vector product $y=Af$; especially when the workers are not dedicated to a particular application and are 
distributed across several geographical regions as is likely in cloud computing infrastructures. Straggling becomes increasingly more likely for larger values of $N$, since, even when the probability that each worker slows down or becomes unresponsive is small, the chance that at least one worker becomes a straggler increases, and so does the expected latency of the iterative solver. 

\subsection{Matrix-vector products with omitted entries}

A worker that becomes a straggler in the current iteration of an iterative solver 
is not necessarily a straggler in a future iteration and vice versa. For example, in 
matrix-vector products with matrices whose rows have roughly equal numbers of non-zero entries, straggling is typically attributed to short-time network contention and latency. Therefore, an iterative solver that aims to mitigate straggling should assume
that neither the number $T$ of straggling workers at a given iteration nor their corresponding index set ${\cal T}$ remains fixed.
In this paper, we aim to develop a flexible framework where both quantities are random variables. 

Let $A$ be a $N\times N$ matrix and consider a random integer $T$ bounded by $1$ 
from below and $N$ from above. Furthermore, let ${\cal T} \subseteq \{1,2,\ldots,N\}$ 
denote a random subset of rows of $A$ of cardinality $T$. In the following, we define the 
matrix-vector product operator ‘$\times_{{\cal T}}$'.
\begin{definition}\label{def:one}
Let $T\in \mathbb{N}$ denote a random integer taking values in the closed interval 
$[1,N]$ and ${\cal T}$ denote a random subset of $T\in \mathbb{N}$ integers without replacement from the integer set $\{1,2,\ldots,N\}$. We define the  matrix-vector product 
$y = A\times_{{\cal T}}f$ between the matrix $A$ and a vector $f\in \mathbb{R}^N$ such 
that the $i$th entry of $y\in \mathbb{R}^N$ is equal to: 
\begin{equation}\label{eq:mvm0}
   [y]_i = 
    \begin{cases}
    \sum_{j=1}^{j=N} A_{ij}f_j  &\ \mathrm{if}\ i\in {\cal T}\\
    0 &\ \mathrm{if}\ i\notin {\cal T}
    \end{cases}.
\end{equation} 
\end{definition}
Unless mentioned otherwise, throughout the rest of this paper we assume that the random variable $T$ takes any of the values $1,2,\ldots,N$ following a certain distribution, and the random subset of ${\cal T}$ picks any 
$T\equiv |{\cal T}|$ integers of $\{1,2,\ldots,N\}$ with equal probability, i.e., 
each one of the ${\mybinom{N}{T}}$ possible row sets of $A$ is picked with probability ${\mybinom{N}{T}}^{-1}$ \cite{teke2018asynchronous,teke2019random}. 

Consider now the diagonal random matrix formed by the summation of $T$ canonical 
outer products
\begin{equation*}
D_{\cal T} = \sum\limits_{i \in {\cal T}}e_ie_i^{\top}.
\end{equation*} 
Equation (\ref{eq:mvm0}) can be written in the equivalent form 
\begin{align*}
        y = D_{\cal T}Af.
\end{align*}
Notice that when $T\equiv N$, as in the classical case, the matrix $D_{\cal T}$ is equal to 
the $N\times N$ identity matrix, and $y = A\times_{{\cal T}}f = Af$. Figures \ref{cartoon1}
and \ref{cartoon2} visualize two matrix-vector multiplications $y=D_{\cal T}Af$ using the 
controller-worker model where $N=4$ and ${\cal T}=\{1,4\}$ or ${\cal T}=\{3\}$, respectively.

\begin{remark}
{The matrix-vector product model presented in this section does not require $A$ to be neither explicitly formed nor sparse. The only assumption we make is the row-wise distribution of workers in computing the matrix-vector products.}
\end{remark}

\subsection{Relation to asynchronous models}

The equation defined in (\ref{eq:mvm0}) computes the exact entry of the matrix-vector product 
depending on whether the corresponding index belongs to the row index subset $\cal T$. 
This concept is akin to the principles of asynchronous iterative algorithms used in computing the stationary point $z=G(z),\ G:\mathbb{R}^N\rightarrow \mathbb{R}^N$, where the 
$i$th entry of the vector $z$ satisfies $[z]_i=G_i(z),\ i=1,\ldots,N$. Asynchronous approaches are particularly advantageous in distributed-memory systems as they minimize idle time across processing elements by reducing synchronization. An asynchronous method for computing the stationary point $z$ can be defined mathematically as
\begin{equation}\label{eq:mvm1}
[z]_i^{k}=\begin{cases}
            G_i\left([z]_1^{s_1(k)},\ldots,[z]_N^{s_N(k)}\right) &\ \mathrm{if}\ i\in {\cal T}_k\\
            [z]_i^{k-1} &\ \mathrm{if}\ i\notin {\cal T}_k\\
\end{cases},
\end{equation}
where $[z]_i^{k}$ denotes the $i$th component of the iterate at time instant $k$, ${\cal T}_k$ is the set of indices updated at instant $k$, and $s_j(k)$ is the last instant the $j$th component was updated before being read at instant $k$ \cite{bertsekas2015parallel,frommer2000asynchronous,wolfson2019modeling}. The increasing gap between the time required to share a floating-point number between different processing elements and the time needed to perform a single floating-point operation by one of the processing elements has led to a revived interest in the analysis and application of asynchronous algorithms in numerical linear algebra  \cite{avron2015revisiting,chazan1969chaotic,frommer2000asynchronous,hook2018performance,glusa2020scalable,teke2018asynchronous,teke2019random,wolfson2019asynchronous}. Moreover, while synchronous stationary solvers require the spectral radius of the iteration matrix to be less than one, asynchronous variants can achieve convergence even when the spectral radius exceeds one. This is because they typically operate on a submatrix of the iteration matrix which might have more favorable properties \cite{wolfson2018convergence,wolfson2019modeling}. 

The algorithms discussed in this paper are fully synchronous and our main objective is to 
successfully solve  linear systems under the constraints in (\ref{eq:mvm0}) rather 
than reduce latency. One notable difference between the models defined by (\ref{eq:mvm0}) 
and (\ref{eq:mvm1}) is that the former does not exploit stale information but instead sets 
any entry not indexed in ${\cal T}_k$ equal to zero.\footnote{A model similar to 
the one defined by (\ref{eq:mvm0}), termed as an “asynchronous method without communication delays", 
has been considered as a special case of asynchronous computing in \cite{wolfson2019modeling}.} 
While a fully asynchronous approach can lead to enhanced computational-communication overlap 
and reduce latency, e.g., see for example \cite{chow2021asynchronous} for asynchronous 
Richardson, our choice to follow (\ref{eq:mvm0}) leads to a simple update formula that we 
analyze in the next two sections.

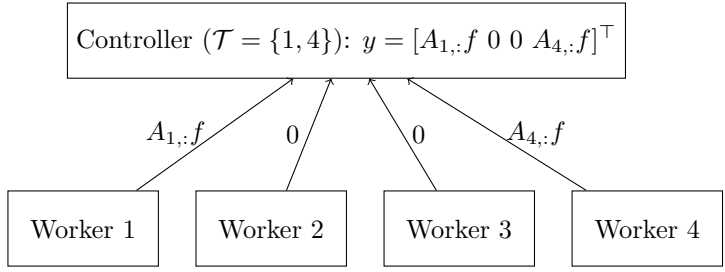
\begin{figure}
\centering
\begin{tikzpicture}
    \node[draw, rectangle, minimum width=2cm, minimum height=1cm] (controller) at (-1.5,0) {Controller (${\cal T}=\{1,4\}$): $y = [A_{1,:}f\ 0\ 0\ A_{4,:}f]^{\top}$};

    \node[draw, rectangle, minimum width=2cm, minimum height=1cm] (worker1) at (-5,-2.5) {Worker 1};
        \node[draw, rectangle, minimum width=2cm, minimum height=1cm] (worker2) at (-2.5,-2.5) {Worker 2};
    \node[draw, rectangle, minimum width=2cm, minimum height=1cm] (worker3) at (0,-2.5) {Worker 3};
        \node[draw, rectangle, minimum width=2cm, minimum height=1cm] (worker4) at (2.5,-2.5) {Worker 4};

    \draw[<-] (controller) -- (worker1) node[midway,left] {$A_{1,:}f$};
    \draw[<-] (controller) -- (worker2) node[midway,left] {$0$};
    \draw[<-] (controller) -- (worker3) node[midway,right] {$0$};
    \draw[<-] (controller) -- (worker4) node[midway,right] {$A_{4,:}f$};
\end{tikzpicture}
\caption{Matrix-vector multiplication $y=D_{\cal T}Af$ under the controller-worker model for a toy example with $N=4$ and $T=2,\ {\cal T}=\{1,4\}$.}\label{cartoon1}
\end{figure}

\begin{figure}
\centering
\begin{tikzpicture}
    \node[draw, rectangle, minimum width=2cm, minimum height=1cm] (controller) at (-1.5,0) {Controller (${\cal T}=\{3\}$): $y = [0\ 0\ A_{3,:}f\ 0]^{\top}$};

    \node[draw, rectangle, minimum width=2cm, minimum height=1cm] (worker1) at (-5,-2.5) {Worker 1};
    \node[draw, rectangle, minimum width=2cm, minimum height=1cm] (worker2) at (-2.5,-2.5) {Worker 2};
    \node[draw, rectangle, minimum width=2cm, minimum height=1cm] (worker3) at (0,-2.5) 
    {Worker 3};
    \node[draw, rectangle, minimum width=2cm, minimum height=1cm] (worker4) at (2.5,-2.5) {Worker 4};

    \draw[<-] (controller) -- (worker1) node[midway,left] {$0$};
    \draw[<-] (controller) -- (worker2) node[midway,left] {$0$};
    \draw[<-] (controller) -- (worker3) node[midway,right] {$A_{3,:}f$};
    \draw[<-] (controller) -- (worker4) node[midway,right] {$0$};
\end{tikzpicture}
\caption{Matrix-vector multiplication $y=D_{\cal T}Af$ under the controller-worker model for a toy example with $N=4$ and $T=1,\ {\cal T}=\{3\}$.}\label{cartoon2}
\end{figure}
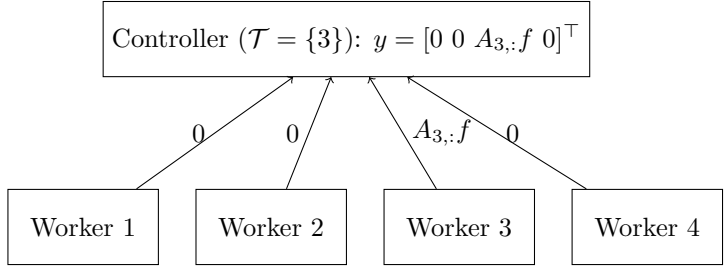

\section{Richardson iteration with straggling workers} \label{sec4}

In this section, we analyze Richardson stationary iteration with row 
sampling to solve the linear system $Az=v,\ v\in \mathbb{R}^N$. 

Let $z_i$ denote the $i$th iterate of Richardson iteration, then $z_{i}$ is computed as
\begin{equation}\label{eq:ric0}
\begin{aligned}
    z_{i} &= z_{i-1} + \omega (v-Az_{i-1})\\ & = (I-\omega A)z_{i-1} + \omega v,
\end{aligned}
\end{equation}
where $\omega \in \mathbb{R}$ is a scalar chosen so that the iterative procedure converges to $z$ \cite{richardson1911ix,varga1962iterative}. Following (\ref{eq:ric0}) and writing $z=(I-\omega A)z+\omega v$, we can express the approximation error at any 
iteration $m\in \mathbb{N}$ as $z_{m}-z = (I-\omega A)(z_{m-1}-z)$, from which it follows 
\begin{equation}\label{eq:ric1}
\begin{aligned}
    z_{m}-z = (I-\omega A)^{m}(z_{0}-z).
\end{aligned}
\end{equation}
Therefore, the norm of the absolute error satisfies the following estimate 
\begin{equation*}
    \norm{z_{m}-z} \leq \|I-\omega A\|^m\norm{z_0-z}.
\end{equation*}
The approximate solution produced by Richardson iteration is guaranteed to converge when $\rho\left(I - \omega A\right) < 1$. 
For Symmetric Positive Definite (SPD) matrices, the optimal value of $\omega$ is equal to $\omega_{\rm CR} = \frac{2}{\lambda_1 + \lambda_N}$, where $\lambda_1$ and $\lambda_N$ denote the smallest and largest eigenvalue of $A$, respectively. In this case, the spectral radius of the iteration matrix is equal to $\rho\left(I - \omega A\right) = 1-\dfrac{2}{1 + \kappa(A)}$ where $\kappa(A) = \lambda_N/\lambda_1$ \cite{saad2003iterative}.

\subsection{A straggler-tolerant scheme} 

We now turn our focus in environments with straggling workers. Let $m$ 
denote once again the number of Richardson iterations performed, and let 
$T_1,\ldots,T_m$ denote $m$ instances of the random variable $T$ with corresponding 
row subset samples ${\cal T}_1,\ldots,{\cal T}_m \subseteq \{1,2,\ldots,N\}$ of the 
random row subset ${\cal T}$ such that $T_i\equiv |{\cal T}_i|,\ i=1,\ldots,m$. 
In this case, we propose the following Richardson update scheme:
\begin{equation}\label{eq:ric2}
    \widehat{z}_{i} = (I-\widehat{\omega} D_{{\cal T}_i}A)\widehat{z}_{i-1} +\omega v, 
\end{equation} 
where $\omega \in \mathbb{R}$ is the scalar parameter associated with a convergent 
classical Richardson iteration and $\widehat{\omega} \in \mathbb{R}$. 
Note that when $\omega\neq \hat{\omega}$, the solution of the  linear system 
$Az=v$ does not equal the fixed point of (\ref{eq:ric2}) even when $T=N$. 

The formula in (\ref{eq:ric2}) is similar to that of the classical Richardson iteration 
(\ref{eq:ric0}) except that the $i$th iteration replaces the (constant) matrix 
$I-\omega A$ with the matrix $I-\widehat{\omega} D_{{\cal T}_i}A$. It is easy to see that 
for any $|{\cal T}_i| \neq N$ the matrix $\widehat{\omega} D_{{\cal T}_i}A$ is rank deficient 
and the matrix {$I-\widehat{\omega} D_{{\cal T}_i}A$ has $N-T_i$} eigenvalues of 
modulus one. Notice that the likelihood of the same $T$ and ${\cal T}$ will be sampled 
is low, especially as $m$ increases. This is because the probability that the row subsets 
${\cal T}_i,\ i=1,\ldots,m$, index the same row subset  $m$ consecutive times is 
${\mybinom{N}{T}}^{-m}$. Even when $T=N-1$, this probability is still ${\mybinom{N}{N-1}}^{-m}=N^{-m}$. 

Following the above discussion, the main question is whether the update formula (\ref{eq:ric2}) produces a sequence that converges to the solution of $Az=v$. Due to randomization, such convergence (if it occurs) will hold only in expectation, i.e., $\lim_{m\rightarrow \infty}\mathbb{E}\left[\widehat{z}_{m}-z\right]=0$. Intuitively, for a fixed $\widehat{\omega}$, we expect the sequence produced by (\ref{eq:ric2}) to make more progress toward the solution $z$ when $\mathbb{E}[T]$ is higher, i.e. when a larger number of rows of $A$ is sampled per matrix-vector product. 

\begin{algorithm}
{Straggler-tolerant} Richardson iteration for solving $Az=v$.
\label{alg:richardson}
\begin{algorithmic}[1]
    \STATE {\bf Given}: $A\in \mathbb{R}^{N\times N};\ 
    v\in \mathbb{R}^N;\ m\in \mathbb{N},\ \widehat{z}_0\in \mathbb{R}^N,\ \widehat{\omega},\ \omega \in \mathbb{R}$.
    \FOR {$i=1$ \TO $m$}
        \STATE Sample $T_i$ and ${\cal T}_i$
        \STATE $\widehat{z}_{i} = (I-\widehat{\omega} D_{{\cal T}_i}A)\widehat{z}_{i-1} + \omega v$ 
    \ENDFOR
    \RETURN $\widehat{z}_{m} $
\end{algorithmic}
\end{algorithm}

Algorithm \ref{alg:richardson} summarizes {straggler-tolerant} Richardson iteration. 
At iteration $i$, Algorithm \ref{alg:richardson} samples $T_i$ and ${\cal T}_i$ (in this order) and 
updates $\widehat{z}_{i-1}$ to $\widehat{z}_i$. Algorithm \ref{alg:richardson} returns once the 
user-chosen number of $m\in \mathbb{N}$ iterations is applied.

\begin{remark}
Algorithm \ref{alg:richardson} assumes a uniform random model to perform the matrix-vector product $\widehat{\omega}D_{{\cal T}_i}A\widehat{z}_{i-1}$. The approach outlined in this paper is different from the random
sparsified Richardson presented in \cite{weare2023randomly} where the update takes the form $z_{i} = 
(I-\omega A)\Phi_i(z_{i-1}) + v$ for some random sparsification operator $\Phi_i$ which requires an integer parameter of the maximum modulus retained values as well as a $N$-length vector of selection probabilities.
\end{remark}

\subsection{Convergence Analysis} \label{coan}

In this section, we discuss theoretical aspects of Algorithm \ref{alg:richardson} 
for the uniform distribution case. Our analysis focuses on the matrix expectations 
that result from the model in (\ref{eq:mvm0}) and is quite different from 
the analysis in the case of a simplified asynchronous Jacobi stationary iteration 
\cite{wolfson2019modeling}.

Starting with the algebraic manipulation
\begin{equation*}
    I-\widehat{\omega} D_{{\cal T}_i}A = I-\widehat{\omega} A + \widehat{\omega}(I- D_{{\cal T}_i})A,
\end{equation*}
it follows that (\ref{eq:ric2}) can be re-written as 
\begin{align*}
    \widehat{z}_{i+1} & = 
    \left(I-\widehat{\omega} A +\widehat{\omega} \left[I-D_{{\cal T}_i}\right]A\right)\widehat{z}_i+ \omega v\\
    & = \left(I-\widehat{\omega} A+E_i\right)\widehat{z}_i +  \omega v,
\end{align*}
where  
\begin{equation*}
    E_i = \widehat{\omega}(I-D_{{\cal T}_i})A.
\end{equation*}
The above equation implies that {when $\widehat{\omega}=\omega$} the $i$th iteration of 
{straggler-tolerant} Richardson iteration can be seen 
as a variation of the $i$th iteration of classical Richardson iteration 
in which the iteration matrix $I-\widehat{\omega} A$ is perturbed by the matrix 
$E_i$. The latter matrix can be understood as a sample of the 
random matrix $E_{\cal T} = \widehat{\omega}(I-D_{{\cal T}})A$. 

\begin{lemma} \label{lem1}
Let ${\cal T}$ denote a random subset of $\{1,2,\ldots,N\}$ whose cardinality 
depends on the random integer variable $T$ that takes values from $1$ to $N$, where ${\cal T}$ is sampled uniformly. 
Then, 
\begin{equation*}
        \mathbb{E}[E_{\cal T}] = \left(\dfrac{N-\mathbb{E}[T]}{N}\right) \widehat{\omega} A.
\end{equation*}
\end{lemma}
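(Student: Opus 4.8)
The plan is to reduce everything to a single computation, namely $\mathbb{E}[D_{\cal T}]$. Since $\widehat{\omega}$ and $A$ are deterministic, linearity of expectation gives immediately
\begin{equation*}
\mathbb{E}[E_{\cal T}] = \widehat{\omega}\bigl(I - \mathbb{E}[D_{\cal T}]\bigr)A,
\end{equation*}
so the entire content of the lemma is the claim that $\mathbb{E}[D_{\cal T}] = \frac{\mathbb{E}[T]}{N}I$. I would state this reduction first and then devote the rest of the argument to $\mathbb{E}[D_{\cal T}]$.

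The key idea is to split the two sources of randomness via the tower property, writing $\mathbb{E}[D_{\cal T}] = \mathbb{E}_T\bigl[\mathbb{E}[D_{\cal T}\mid T]\bigr]$, and to evaluate the inner conditional expectation for a fixed cardinality $T$ where the uniformity assumption is clean. Here I would use that $D_{\cal T} = \sum_{i\in{\cal T}} e_i e_i^{\top}$ is the diagonal matrix carrying a $1$ in position $i$ exactly when $i\in{\cal T}$ and a $0$ otherwise; consequently $\mathbb{E}[D_{\cal T}\mid T]$ is again diagonal, with $(i,i)$ entry equal to $\Pr[\,i\in{\cal T}\mid T\,]$. A short counting argument then pins this probability down: among the $\mybinom{N}{T}$ equiprobable size-$T$ subsets of $\{1,\ldots,N\}$, exactly $\mybinom{N-1}{T-1}$ contain any fixed index $i$, so $\Pr[\,i\in{\cal T}\mid T\,] = \mybinom{N-1}{T-1}/\mybinom{N}{T} = T/N$, and this value does not depend on $i$. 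Hence $\mathbb{E}[D_{\cal T}\mid T] = \tfrac{T}{N}I$, and taking the outer expectation over $T$ yields $\mathbb{E}[D_{\cal T}] = \tfrac{\mathbb{E}[T]}{N}I$. Substituting back gives $\mathbb{E}[E_{\cal T}] = \widehat{\omega}\bigl(1-\tfrac{\mathbb{E}[T]}{N}\bigr)A = \tfrac{N-\mathbb{E}[T]}{N}\widehat{\omega}A$, as desired.

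There is no deep obstacle here; the computation is essentially elementary. The one point that requires care, and the step I would emphasize, is the order of conditioning: one must average over ${\cal T}$ for a fixed $T$ first, where the uniform model gives the symmetric per-index inclusion probability $T/N$, and only afterward average over $T$. Attempting to handle the joint law of $(T,{\cal T})$ in a single stroke obscures the symmetry that makes the diagonal of $\mathbb{E}[D_{\cal T}\mid T]$ constant, so isolating the conditional step is what keeps the argument transparent.
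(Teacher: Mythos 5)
Your proposal is correct and follows essentially the same route as the paper: both condition on $T$ via the tower property, evaluate the inner expectation using the count $\mybinom{N-1}{T-1}/\mybinom{N}{T}=T/N$ of size-$T$ subsets containing a fixed index, and then average over $T$. The only cosmetic difference is that you phrase the inner step entrywise as $\mathbb{E}[D_{\cal T}\mid T]=\tfrac{T}{N}I$, while the paper sums $\widehat{\omega}(I-D_{\cal T})A$ directly over all equiprobable subsets; the underlying computation is identical.
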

\begin{proof}
By the Law of Total Expectation \cite{weiss2012elementary}, the expectation $\mathbb{E}[E]$ can be written as $\mathbb{E}_T[\mathbb{E}_{\cal T}[E|T]]$ where 
the outer expectation is with respect to the cardinality $T$ of the random integer 
set ${\cal T}$ and the inner expectation is with respect to the content of ${\cal T}$. Denoting by $\mathbb{P}[E=E_{\cal T}|T]$ 
the probability that $E_{\cal T}$ is realized for a random row subset ${\cal T}$ of cardinality $T$, we have 
\begin{align*}
    \mathbb{E}_{\cal T}[E|T] & = \sum_{\cal T} \mathbb{P}[E=E_{\cal T}|T] E_{\cal T} \\
    & = \sum_{\cal T} \dbinom{N}{T}^{-1} \widehat{\omega}(I-D_{\cal T})A\\
    & = \widehat{\omega}\left(A - \dbinom{N}{T}^{-1} \dbinom{N-1}{T-1}A\right)\\
    & = \widehat{\omega}\left(A - \dfrac{T}{N}A\right).
\end{align*}
The proof follows by noticing  
$\mathbb{E}_T[\mathbb{E}_{\cal T}[E|T]]=\widehat{\omega}\mathbb{E}_T\left[\dfrac{N-T}{N}A\right] = 
\left(\dfrac{N-\mathbb{E}[T]}{N}\right)\widehat{\omega} A$.
\end{proof}
Lemma \ref{lem1} shows that the expectation of the perturbation introduced by 
Algorithm \ref{alg:richardson} is equal to a scalar multiple of the matrix 
$\widehat{\omega} A$. This multiple decreases as $\mathbb{E}[T]\rightarrow N$ and increases 
in the opposite direction. For example, $\mathbb{E}[E_{\cal T}]=\widehat{\omega} A/N$ when 
$\mathbb{E}[T]=N-1$ and 
$\mathbb{E}[E_{\cal T}] = (N-1)\widehat{\omega} A/N$  when $\mathbb{E}[T]=1$, respectively. Thus, increasing $\mathbb{E}[T]$, i.e., sampling a larger number of rows at each step, decreases the expectation of the matrix error. In the next proposition, we consider a special $\widehat{\omega}$ which yields an unbiased matrix-vector product associated with $I-\widehat{\omega}D_{\cal T}A$.

\begin{proposition} \label{pro121}
    Let $\widehat{\omega} = \frac{N}{\mathbb{E}[T]}\omega$, where $\omega \in \mathbb{R}$ is a scalar parameter. Then, 
    $\mathbb{E}\left[I-\widehat{\omega}D_{\cal T}A\right] = I-\omega A$.
\end{proposition}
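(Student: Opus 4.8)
The plan is to reduce the claim to Lemma \ref{lem1} via a short algebraic rearrangement, since the hard combinatorial work—computing the expectation of the sampling matrix $D_{\cal T}$—has already been carried out there. First I would invoke linearity of expectation to write $\mathbb{E}[I-\widehat{\omega}D_{\cal T}A] = I - \widehat{\omega}\,\mathbb{E}[D_{\cal T}]\,A$, so that everything reduces to identifying the single matrix expectation $\mathbb{E}[D_{\cal T}]$.

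To obtain this expectation I would reuse the decomposition introduced in the discussion preceding Lemma \ref{lem1}, namely $I-\widehat{\omega}D_{\cal T}A = I - \widehat{\omega}A + E_{\cal T}$ with $E_{\cal T} = \widehat{\omega}(I-D_{\cal T})A$. Taking expectations and substituting the value $\mathbb{E}[E_{\cal T}] = \frac{N-\mathbb{E}[T]}{N}\widehat{\omega}A$ supplied by Lemma \ref{lem1}, the two multiples of $\widehat{\omega}A$ combine and the coefficient collapses to $\frac{\mathbb{E}[T]}{N}$; that is, $\mathbb{E}[I-\widehat{\omega}D_{\cal T}A] = I - \frac{\mathbb{E}[T]}{N}\widehat{\omega}A$. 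The final step is simply to insert the prescribed choice $\widehat{\omega} = \frac{N}{\mathbb{E}[T]}\omega$, at which point the factor $\frac{\mathbb{E}[T]}{N}\widehat{\omega}$ telescopes to $\omega$ and the right-hand side becomes $I-\omega A$, as claimed.

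Because this proposition is essentially a corollary of Lemma \ref{lem1}, I do not anticipate a genuine obstacle. The only point requiring minor care is to keep the conditioning explicit: the identity $\mathbb{E}[D_{\cal T}] = \frac{\mathbb{E}[T]}{N}I$ follows from the Law of Total Expectation, using that each index lies in a uniformly random set of fixed cardinality $T$ with probability $T/N$ and then averaging over the distribution of $T$—but this is precisely the averaging already performed inside the proof of Lemma \ref{lem1}, so the derivation here amounts to bookkeeping rather than new analysis.
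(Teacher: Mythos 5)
Your proposal is correct and follows essentially the same route as the paper: both reduce the claim to Lemma \ref{lem1} (equivalently, to $\mathbb{E}[D_{\cal T}A]=\frac{\mathbb{E}[T]}{N}A$), obtain $\mathbb{E}[I-\widehat{\omega}D_{\cal T}A]=I-\frac{\mathbb{E}[T]}{N}\widehat{\omega}A$ by linearity, and conclude by substituting $\widehat{\omega}=\frac{N}{\mathbb{E}[T]}\omega$. The only cosmetic difference is that you pass explicitly through $E_{\cal T}$, while the paper states the equivalent identity for $\mathbb{E}[D_{\cal T}A]$ directly.
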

\begin{proof}
    From Lemma \ref{lem1} we know that 
    $\mathbb{E}\left[D_{\cal T}A\right] = \dfrac{\mathbb{E}[T]}{N} A$, and thus 
    \begin{equation*}
        \mathbb{E}\left[I-\widehat{\omega}D_{\cal T}A\right] =I-\widehat{\omega}\frac{\mathbb{E}[T]}{N}A.
    \end{equation*}
    The proof follows by substituting $\widehat{\omega} = \frac{N}{\mathbb{E}[T]}\omega$.
\end{proof}

Proposition \ref{pro121} shows that if $\mathbb{E}[T]$ is known\footnote{Such information can become available either by the application itself or by warm-starting the hardware resources and executing several matrix-vector products before the actual solver is applied.} then we can pick $\widehat{\omega}$ so that in expectation the matrix to be multiplied at each iteration is equal to the deterministic matrix $I-\omega A$ used in classical Richardson iteration. 
This motivates us to consider whether adjusting the parameter $\widehat{\omega}$ can -in expectation- bring $\widehat{z}_m$ closer to $z_m$.  In the remaining of this section, we consider the following question: assume the classical Richardson iteration converges for some $\omega \in \mathbb{R}$, what is the sufficient condition for Algorithm \ref{alg:richardson} to converge in expectation.

Before we prove the convergence results of Algorithm \ref{alg:richardson}, we first derive the explicit expressions of $\hat{z}_m$ and $z_m$ via expanding the recursive update formulas of \eqref{eq:ric0} and \eqref{eq:ric2} in the next lemma.
\begin{lemma}\label{lem2}
    Let $I-\widehat{\omega} D_{{\cal T}_i}A = I-\widehat{\omega} A + E_i$. After $m$ iterations of Algorithm \ref{alg:richardson} and classical Richardson, we obtain
    \begin{equation*}
        \widehat{z}_m =   \prod\limits_{i=1}^{m}\left(I-\widehat{\omega} A+E_i\right) \widehat{z}_0 +\omega \left[\prod\limits_{i=2}^{m}\left(I-\widehat{\omega} A+E_i\right) +\cdots+ \left(I-\widehat{\omega} A+E_m\right)+I\right]v,
    \end{equation*}
    and 
        \begin{equation*}
        z_m = (I-\omega A)^m z_0 + \omega\left[(I-\omega A)^{m-1} +\cdots   +\omega(I-\omega A) + I\right] v,
    \end{equation*}
    respectively. 
\end{lemma}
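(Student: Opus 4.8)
The plan is to prove both identities by induction on the iteration count $m$, handling the two recurrences (\ref{eq:ric2}) and (\ref{eq:ric0}) in parallel since they share the same affine structure $x_i = M_i x_{i-1} + \omega v$, with $M_i = I-\widehat{\omega}A+E_i$ in the straggler-tolerant case and $M_i = I-\omega A$ (constant in $i$) in the classical case. Before starting I would fix the ordering convention for the non-commutative product: since the factors $I-\widehat{\omega}A+E_i$ generally fail to commute (the $E_i$ depend on the sampled subsets ${\cal T}_i$), the symbol $\prod_{i=1}^{m}$ must be read with the largest index on the left, i.e.\ $\prod_{i=1}^{m}(I-\widehat{\omega}A+E_i)=(I-\widehat{\omega}A+E_m)\cdots(I-\widehat{\omega}A+E_1)$, matching the order in which factors accumulate as the recurrence is unrolled from the outside inward.

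For the base case $m=1$ the recurrence gives $\widehat{z}_1=(I-\widehat{\omega}A+E_1)\widehat{z}_0+\omega v$, which agrees with the claimed formula: the product over the single-index range reduces to the one factor $I-\widehat{\omega}A+E_1$ acting on $\widehat{z}_0$, while the bracketed sum multiplying $v$ collapses to the identity $I$. For the inductive step I would assume the formula holds at level $m-1$ and substitute it into the one-step update $\widehat{z}_m=(I-\widehat{\omega}A+E_m)\widehat{z}_{m-1}+\omega v$. Left-multiplying the inductive expression by $I-\widehat{\omega}A+E_m$ promotes each partial product $\prod_{i=1}^{m-1}$, $\prod_{i=2}^{m-1}$, and so on to the corresponding product with upper index $m$, while the freshly added $+\omega v$ supplies the trailing $I$ term in the bracket; collecting terms reproduces exactly the stated expression at level $m$.

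The classical identity then follows as the special case in which every factor equals the same matrix $I-\omega A$: because these factors coincide, the ordering ambiguity disappears and each partial product $\prod_{i=k}^{m}(I-\omega A)$ collapses to the power $(I-\omega A)^{m-k+1}$, turning the bracket into $(I-\omega A)^{m-1}+\cdots+(I-\omega A)+I$ and the leading coefficient into $(I-\omega A)^{m}$. The only point demanding genuine care is the non-commutativity in the straggler-tolerant case: one must check that left-multiplication by $I-\widehat{\omega}A+E_m$ acts on the left end of each already-ordered product so that no factor is inserted in the wrong position. Beyond this bookkeeping of the indices in the bracketed sum, the argument is a routine unrolling of an affine linear recurrence and presents no substantive difficulty.
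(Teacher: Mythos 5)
Your proof is correct and follows essentially the same route as the paper: the paper also argues by induction, unrolling the recurrence explicitly for $\widehat{z}_1,\widehat{z}_2,\widehat{z}_3$ and observing that each new step left-multiplies all accumulated terms by $I-\widehat{\omega}A+E_i$ and appends $\omega v$. Your explicit attention to the ordering convention of the non-commutative product is a welcome refinement that the paper leaves implicit, but it does not constitute a different approach.
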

\begin{proof}
The proof proceeds by induction. Using $\widehat{z}_{i}=(I-\widehat{\omega}D_{{\cal T}_i} A)\widehat{z}_{i-1} + \omega v$ and extending the first few $\widehat{z}_{i}$ terms, e.g., $\widehat{z}_{1}$, $\widehat{z}_{2}$, and $\widehat{z}_{3}$, yields:   
    \begin{align*}
        \widehat{z}_{1} & = (I-\widehat{\omega} A + E_1)\widehat{z}_{0} + \omega v\\
        \widehat{z}_{2} & = (I-\widehat{\omega} A + E_2)\left[(I-\widehat{\omega} A + E_1)\widehat{z}_{0} + \omega v\right] + \omega v\\ &  = \prod\limits_{i=1}^{2}\left(I-\widehat{\omega} A+E_i\right)\widehat{z}_{0} + \omega \left(I-\widehat{\omega} A+E_2\right)v + \omega v\\
        \widehat{z}_{3} & = (I-\widehat{\omega} A + E_3)\left[(I-\widehat{\omega}D_{{\cal T}_2} A)\widehat{z}_{1} + \omega v\right] + \omega v\\ &  = \prod\limits_{i=1}^{3}\left(I-\widehat{\omega} A+E_i\right)\widehat{z}_{0} + \omega \prod\limits_{i=2}^{3}\left(I-\widehat{\omega} A+E_i\right)v + \omega \left(I-\widehat{\omega} A+E_3\right)v + \omega v.
    \end{align*}
    Therefore, each new iteration of Algorithm \ref{alg:richardson} multiplies all previous terms by 
    a new matrix $I-\widehat{\omega} A+E_i$ and adds the term $\omega v$. The case for classical Richardson 
    is identical except that now we exploit the formula $z_{i}=(I-\omega A)z_{i-1} + \omega v$. 
\end{proof}

Finally, we prove the convergence result of Algorithm \ref{alg:richardson} in the next theorem.
\begin{theorem} \label{thm1}
    Let $\widehat{\omega} = \frac{N}{\mathbb{E}[T]}\omega$, where $\omega \in \mathbb{R}$ is the scalar parameter associated with a convergent classical Richardson iteration. Assume  $\widehat{z}_m$ and $z_m$ are the $m$th iterates generated by Algorithm \ref{alg:richardson} and classical Richardson iteration, respectively. If $\widehat{z}_0=z_0=f$ for some $f\in \mathbb{R}^N$, then
    \begin{equation*}
        \mathbb{E}\left[\widehat{z}_m\right]=z_m.
    \end{equation*}
\end{theorem}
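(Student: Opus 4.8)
The plan is to start from the closed-form expansions of $\widehat{z}_m$ and $z_m$ supplied by Lemma \ref{lem2}, take the expectation of the former term by term, and show that it collapses onto the latter. Writing $M_i = I-\widehat{\omega}D_{{\cal T}_i}A = I-\widehat{\omega}A+E_i$, Lemma \ref{lem2} expresses $\widehat{z}_m$ as the sum of an initial-state term $\prod_{i=1}^{m}M_i\,\widehat{z}_0$ and a collection of source terms of the form $\omega\prod_{i=k}^{m}M_i\,v$ for $k=2,\ldots,m$, together with $\omega v$. Since $\widehat{z}_0=f$ is deterministic, by linearity of expectation the task reduces to evaluating $\mathbb{E}\!\left[\prod_{i=k}^{m}M_i\right]$ for each $k$.

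First I would invoke the independence of the matrices $M_1,\ldots,M_m$. By construction in Algorithm \ref{alg:richardson}, the pairs $(T_i,{\cal T}_i)$ are drawn independently across iterations, so the matrices $E_i$, and hence the $M_i$, are mutually independent. The key step is then the fact that for independent random matrices the expectation of an ordered product equals the ordered product of the expectations, i.e.
\[
\mathbb{E}\!\left[\prod_{i=k}^{m}M_i\right]=\prod_{i=k}^{m}\mathbb{E}[M_i].
\]
This identity holds despite the non-commutativity of matrix multiplication because it can be proved entrywise: each entry of the product is a sum of products of entries drawn from distinct, independent factors, so the expectation factors while the left-to-right ordering of the factors is preserved.

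Next I would apply Proposition \ref{pro121}, which gives $\mathbb{E}[M_i]=\mathbb{E}[I-\widehat{\omega}D_{\cal T}A]=I-\omega A$ precisely under the choice $\widehat{\omega}=\frac{N}{\mathbb{E}[T]}\omega$. Substituting this into the factored expectation yields $\mathbb{E}\!\left[\prod_{i=k}^{m}M_i\right]=(I-\omega A)^{m-k+1}$ for every $k$, and in particular $\mathbb{E}\!\left[\prod_{i=1}^{m}M_i\right]=(I-\omega A)^{m}$. Inserting these into the expanded expression for $\widehat{z}_m$ and using $z_0=f$ reproduces exactly the closed form of $z_m$ from Lemma \ref{lem2}, which establishes $\mathbb{E}[\widehat{z}_m]=z_m$.

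The main obstacle to watch out for is the factorization of the expectation of the product: it is tempting but wrong to treat this as a commuting-scalar computation, and the argument genuinely relies on the per-iteration independence of the samples rather than on any algebraic relation between $A$ and the $E_i$. I would therefore state the entrywise factorization carefully and note explicitly that it is the independence \emph{across} iterations, not within a single $M_i$, that is being used; the remaining substitutions amount to a routine telescoping of the geometric-type sum.
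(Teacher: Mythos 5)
Your proposal is correct and follows essentially the same route as the paper's proof: expand $\widehat{z}_m$ via Lemma \ref{lem2}, use the independence of the per-iteration samples to factor the expectation of each ordered product into a product of expectations, and identify each factor with $I-\omega A$ under the choice $\widehat{\omega}=\frac{N}{\mathbb{E}[T]}\omega$ (the paper routes this last step through Lemma \ref{lem1} and works with the difference $\widehat{z}_m-z_m$, while you invoke Proposition \ref{pro121} directly, but these are cosmetic differences). Your explicit entrywise justification of why $\mathbb{E}\left[\prod_i M_i\right]=\prod_i\mathbb{E}[M_i]$ holds for independent, non-commuting random matrices is a point the paper asserts without comment, and is a welcome addition.
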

\begin{proof}
    Following Lemma \ref{lem2}, we can write the difference $\widehat{z}_m-z_m$ as 
    \begin{align*}
       \widehat{z}_m - z_m\ =\ &\ \ \ \left[\prod\limits_{i=1}^{m}\left(I-\widehat{\omega} A+E_i\right) - (I-\omega A)^m\right] f\\
        & + \omega
        \left[\prod\limits_{i=2}^{m}\left(I-\widehat{\omega} A+E_i\right)-(I-\omega A)^{m-1}\right]v\\ 
        & + \cdots\\ 
        & + \omega
        \left[\prod\limits_{i=m-1}^{m}\left(I-\widehat{\omega} A+E_i\right)-(I-\omega A)^{2}\right]v\\ 
        & + \omega
        \left[\left(I-\widehat{\omega} A+E_m\right)-(I-\omega A)\right]v\\ 
        & + \omega v-\omega v.
    \end{align*}
    Taking expectations on both sides and applying the linearity of the expectation operator leads to
        \begin{align*}
       \mathbb{E}\left[\widehat{z}_m - z_m\right]\ =\ &\ \ \ 
       \left[\mathbb{E}\left[\prod\limits_{i=1}^{m}\left(I-\widehat{\omega} A+E_i\right)\right] - (I-\omega A)^m\right] f\\
        & + \omega
        \left[\mathbb{E}\left[\prod\limits_{i=2}^{m}\left(I-\widehat{\omega} A+E_i\right)\right]-(I-\omega A)^{m-1}\right]v\\ 
        & + \cdots\\ 
        & + \omega
        \left[\mathbb{E}\left[\prod\limits_{i=m-1}^{m}\left(I-\widehat{\omega} A+E_i\right)\right]-(I-\omega A)^{2}\right]v\\ 
        & + \omega
        \left[\mathbb{E}\left[I-\widehat{\omega} A+E_m\right]-(I-\omega A)\right]v.
    \end{align*}
    Therefore, if 
    $\mathbb{E}\left[\prod\limits_{i=m-k+1}^{m}\left(I-\widehat{\omega} A+E_i\right)\right] = (I-\omega A)^k,\ k=1,\ldots,m$, we can determine that all quantities inside the outermost brackets become equal and thus $\mathbb{E}\left[\widehat{z}_m-z_m\right] = 0$. To show the former, we need to determine $\mathbb{E}\left[\prod\limits_{i=m-k+1}^{m}\left(I-\widehat{\omega} A+E_i\right)\right]$. Since the matrices $E_i$ are independent and identically distributed, we can write 
    \begin{align*}
    \mathbb{E}\left[\prod\limits_{i=m-k+1}^{m}\left(I-\widehat{\omega} A+E_i\right)\right] & = 
    \mathbb{E}\left[\left(I-\widehat{\omega} A+E_{m}\right) \cdots \left(I-\widehat{\omega} A+E_{m-k+1}\right)\right]\\ 
    & = \mathbb{E}\left[\left(I-\widehat{\omega} A+E_{m}\right)\right]\cdots \mathbb{E}\left[\left(I-\widehat{\omega} A+E_{m-k+1}\right)\right]\\
    & =     \prod\limits_{i=m-k+1}^{m}\mathbb{E}\left[I-\widehat{\omega} A+E_i\right].
    \end{align*}
    Notice now that by Lemma \ref{lem1}, we have $\mathbb{E}[E_i] = \dfrac{N-\mathbb{E}[T]}{N}\widehat{\omega} A$. 
    Thus, 
\begin{align*}
    \mathbb{E}\left[I-\widehat{\omega} A+E_i\right] & = I-\widehat{\omega} A +\mathbb{E}[E_i]\\ & 
    =  I - \dfrac{\mathbb{E}[T]}{N}\widehat{\omega} A,
\end{align*}
which leads to 
\begin{equation*}
    \prod\limits_{i=m-k+1}^{m}\mathbb{E}\left[I-\widehat{\omega} A+E_i\right] = 
    \left(I - \dfrac{\mathbb{E}[T]}{N}\widehat{\omega} A\right)^k.
\end{equation*}
It follows that 
\begin{equation*}
    \mathbb{E}\left[\prod\limits_{i=m-k+1}^{m}\left(I-\widehat{\omega} A+E_i\right)\right] - (I-\omega A)^k = 
    \left(I - \dfrac{\mathbb{E}[T]}{N}\widehat{\omega} A\right)^k - (I-\omega A)^k,
\end{equation*}
which is equal to zero when $\widehat{\omega} = \dfrac{N}{\mathbb{E}[T]}\omega$.
\end{proof}
\begin{remark}\label{myrem}
Theorem \ref{thm1} implies that when $\widehat{\omega} = \omega$, $\mathbb{E}\left[\widehat{z}_m-z_m\right]$ has non-zero components along the direction of the vectors $\left[ \left(I - \dfrac{\mathbb{E}[T]}{N}\omega A\right)^k - (I-\omega A)^k\right]v,\ k=1,\ldots,m$, and thus $\mathbb{E}\left[\widehat{z}_m\right]$ does not converge to $z_m$.
\end{remark}

An immediate consequence of Theorem \ref{thm1} is that the expectation of the iterates generated by Algorithm \ref{alg:richardson} will converge to the true solution of $Az=v$.
\begin{corollary}\label{cor10}
Following the conditions in Theorem \ref{thm1}, we have
$$\lim\limits_{m\rightarrow \infty}\mathbb{E}\left[\widehat{z}_m\right]=z,$$ where 
$z$ is the solution of the linear system $Az=v$. 
\end{corollary}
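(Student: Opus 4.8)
The plan is to combine Theorem \ref{thm1} with the assumed convergence of the classical Richardson iteration, reducing the stochastic claim to a deterministic one. First I would invoke Theorem \ref{thm1}, whose hypotheses are precisely those assumed in the corollary, to obtain the exact finite-horizon identity $\mathbb{E}[\widehat{z}_m]=z_m$ for every $m\in\mathbb{N}$. This step does all the real work: it guarantees that the bias introduced by the random row sampling is cancelled at every iteration by the choice $\widehat{\omega}=\frac{N}{\mathbb{E}[T]}\omega$, so that the expected straggler-tolerant iterate coincides with the ordinary Richardson iterate.

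Next I would exploit the fact that, by hypothesis, $\omega$ is the parameter of a \emph{convergent} classical Richardson iteration, so that $\rho(I-\omega A)<1$. Using the error representation \eqref{eq:ric1}, namely $z_m-z=(I-\omega A)^m(z_0-z)$, together with the standard fact that $(I-\omega A)^m\to 0$ whenever the spectral radius is strictly less than one, I would conclude that $z_m\to z$ as $m\to\infty$, where $z$ solves $Az=v$. Combining this with the identity from Theorem \ref{thm1} gives $\lim_{m\rightarrow\infty}\mathbb{E}[\widehat{z}_m]=\lim_{m\rightarrow\infty}z_m=z$.

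I emphasize that no interchange of limit and expectation is needed here: for each fixed $m$ the expectation has already been evaluated in closed form via Theorem \ref{thm1}, so the limit on the left is simply the limit of the deterministic sequence $z_m$. Consequently there is essentially no analytic obstacle in this corollary; the substantive content resides entirely in Theorem \ref{thm1}. The only point I would flag for the reader is that convergence is asserted solely \emph{in expectation}, and not almost surely or in mean square. Indeed, as noted in the discussion preceding the theorem, the variance of $\widehat{z}_m$ may grow with $m$, so this result makes no claim about the pathwise or second-moment behaviour of the iterates.
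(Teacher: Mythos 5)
Your proposal is correct and matches the paper's intent exactly: the paper presents this corollary as an immediate consequence of Theorem \ref{thm1} with no written proof, and the implicit argument is precisely yours---apply Theorem \ref{thm1} to get $\mathbb{E}[\widehat{z}_m]=z_m$ for each $m$, then use $\rho(I-\omega A)<1$ and the error formula \eqref{eq:ric1} to conclude $z_m\to z$. Your added observations (no limit--expectation interchange is needed, and convergence is only in expectation) are accurate and consistent with the paper's discussion of the variance.
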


\subsection{Effects of the iteration number $m$}

Theorem \ref{thm1} suggests that $\mathbb{E}[\widehat{z}_m]$ converges to the approximation $z_m$ returned by classical Richardson iteration. However, for $\widehat{z}_m$ to be a good approximation of the iterate $z_m$, the variance of each entry in $\widehat{z}_m$ should be small. While a general theoretical study of the variance lies beyond the scope of this paper, we will investigate the variance of $\widehat{z}_m$ as $m$ increases/decreases for the special case $\widehat{z}_0=\omega v$. 

Following Theorem \ref{thm1}, we can write the expectation of the approximation returned after $m$ steps of Algorithm \ref{alg:richardson} as $$\mathbb{E}[\widehat{z}_{m}] = \omega \sum\limits_{k=0}^m \left(I - \dfrac{\mathbb{E}[T]}{N}\widehat{\omega} A\right)^k v.$$ Since the matrix sum in the above equation is a geometric series, it can be further written as 
$$\sum\limits_{k=0}^m \left(I - \dfrac{\mathbb{E}[T]}{N}\widehat{\omega} A\right)^k = \left(\dfrac{\mathbb{E}[T]}{N}\widehat{\omega} A\right)^{-1}\left(I-\left(I-\dfrac{\mathbb{E}[T]}{N}\widehat{\omega} A\right)^{m+1}\right).$$
Thus, the expectation $\mathbb{E}[\widehat{z}_{m}]$ can be re-written as
\begin{equation}
    \mathbb{E}[\widehat{z}_m] = \omega \left(\dfrac{\mathbb{E}[T]}{N}\widehat{\omega} A\right)^{-1}\left(I-\left(I-\dfrac{\mathbb{E}[T]}{N}\widehat{\omega} A\right)^{m+1}\right) v.
    \label{eq:variance}
\end{equation}
It is easy to see that when $\widehat{\omega} = \dfrac{N}{\mathbb{E}[T]}\omega$ and 
$m$ is large enough, $\mathbb{E}[\widehat{z}_m]$ converges\footnote{Under the assumption that $\omega$ makes the spectral radius of $I-\omega A$ less than one.} to $A^{-1}v$:
\begin{equation}
    \mathbb{E}[\widehat{z}_m] \rightarrow z=A^{-1}v \  \ \text{as $m$ $\rightarrow$ $\infty$.}
    \label{eq:meanconverge}
\end{equation}

We now turn our attention to the covariance of 
$\widehat{z}_m$ as $m\rightarrow \infty$  \cite{walpole1993probability}:
\begin{equation}
\label{eq:convdecomp}
    \mathrm{Cov}(\widehat{z}_m) = \mathbb{E}[\widehat{z}_m \widehat{z}_m^{\top}] - \left( \mathbb{E}[\widehat{z}_m] \right) \left( \mathbb{E}[\widehat{z}_m] \right)^{\top}.
\end{equation}
Based on \eqref{eq:meanconverge}, we know that $\left( \mathbb{E}[\widehat{z}_m] \right) \left( \mathbb{E}[\widehat{z}_m] \right)^{\top} $ converges to $A^{-1}vv^{\top}A^{-\top}$ as $m\rightarrow \infty$.
Since $\left( \mathbb{E}[\widehat{z}_m] \right) \left( \mathbb{E}[\widehat{z}_m] \right)^{\top}$ converges to a constant matrix, we now focus on the first term on the right-hand side of \eqref{eq:convdecomp}. To this end, define $F_m = \sum_{j=1}^{m} \prod_{i=j}^{m} (I - \widehat{\omega} A + E_i)$. Based on Lemma \ref{lem2}, $\mathbb{E}[\widehat{z}_m \widehat{z}_m^{\top}]$ can be decomposed as 
\begin{equation}
\label{eq:covv}
\begin{aligned}
    \mathbb{E}[\widehat{z}_m \widehat{z}_m^{\top}] & = \omega^2 \mathbb{E}\left[(F_m+I)vv^{\top}(I+F_m^{\top})\right]\\
    & = \omega^2\left(\mathbb{E}\left[F_mvv^{\top}F_m^{\top}\right] + \mathbb{E}\left[vv^{\top}F_m^{\top}\right] + \mathbb{E}\left[F_mvv^{\top}\right] + \mathbb{E}\left[vv^{\top}\right]\right).
\end{aligned}
\end{equation}
Following the proof of Theorem \ref{thm1}, we know that $\mathbb{E}\left[I - \widehat{\omega} A + E_i\right]=
I-\dfrac{\mathbb{E}[T]}{N}\widehat{\omega} A$. Thus, we can write 
\begin{equation*}
    \mathbb{E}\left[F_m\right] = \sum_{j=1}^{m} 
\prod_{i=j}^{m} \mathbb{E}\left[I - \widehat{\omega} A + E_i\right] = \sum_{j=1}^{m} \left(I - \dfrac{\mathbb{E}[T]}{N}\widehat{\omega} A\right)^{j}.
\end{equation*}
Notice that when $\widehat{\omega} = \dfrac{N}{\mathbb{E}[T]}\omega$ and 
$\rho(I-\omega A)<1$, the expectation of the matrix $F_m$ is instead equal to   
$\mathbb{E}\left[F_m\right] = \sum_{j=1}^{m} \left(I - {\omega} A\right)^{j}$, 
which converges to $(\omega A)^{-1}-I$ as $m\rightarrow\infty$. 

The above analysis implies that the variation of $\mathrm{Cov}(\widehat{z}_m)$\footnote{Note that $v$ is constant and thus  $\mathbb{E}\left[F_mvv^{\top}\right]=\mathbb{E}\left[F_m\right]vv^{\top}$.} is mainly due to $\mathbb{E}\left[F_mvv^{\top}F_m^{\top}\right]$  as $m$ becomes large enough. We can compactly express $\mathbb{E}\left[F_mvv^{\top}F_m^{\top}\right]$ in the form of a double sum as follows:
\begin{equation*}
\mathbb{E}[F_mvv^{\top}F_m^{\top}] = \sum_{j=1}^{m} \sum_{k=1}^m 
\mathbb{E}\left[\left(\prod_{i=j}^{m} I - \widehat{\omega} A + E_i\right) vv^{\top} \left(\prod_{i=k}^{m} 
I - \widehat{\omega} A + E_i\right)^{\top}\right].
\end{equation*}
{Denote} now $\mathbb{E}\left[\left(\prod_{i=j}^{m} I - \widehat{\omega} A + E_i\right)vv^\top \left(\prod_{i=k}^{m} 
I - \widehat{\omega} A + E_i\right)^{\top}\right]$ by $E_{jk}$. Then, we can write
\begin{align*}
E_{jk} &
=\mathbb{E}\left[\left(\prod_{i=j}^{m} I - \widehat{\omega} A + E_i\right)v\right] \mathbb{E}\left[v^\top \left(\prod_{i=k}^{m} 
I - \widehat{\omega} A + E_i\right)^{\top}\right] + C_{jk}\\
& = \left(I - \frac{\mathbb{E}[T]}{N} \widehat{\omega} A\right)^{m-j+1}vv^{\top}
\left(I - \frac{\mathbb{E}[T]}{N} \widehat{\omega} A^\top\right)^{m-k+1} +C_{jk},
\end{align*} 
where $C_{jk}$ denotes the $N\times N$ covariance matrix between 
$\left(\prod_{i=j}^{m} I - \widehat{\omega} A + E_i\right)v$ and 
$\left(\prod_{i=k}^{m} I - \widehat{\omega} A + E_i\right)v$. While an extended 
analysis on the magnitude of the diagonal entries of $E_{jk}$ lies 
beyond the scope of the present paper, we note that when  
$\widehat{\omega}=\dfrac{N}{\mathbb{E}[T]}\omega$ and $\rho(I-\omega A)<1$, 
the matrix $\left(I - \frac{\mathbb{E}[T]}{N} \widehat{\omega} A\right)^{m-j+1}vv^{\top}\left(I - \frac{\mathbb{E}[T]}{N} \widehat{\omega} A^\top\right)^{m-k+1}$  
converges to zero as $m$ increases and $j,\ k$ remain fixed. This 
observation indicates that, outside of the influence of $C_{jk}$,  
the diagonal entries of $\mathbb{E}[F_mvv^{\top}F_m^{\top}]$ might 
increase marginally as $m$ increases beyond a certain value.

\begin{figure}
\centering
    \includegraphics[width=1.0\textwidth]{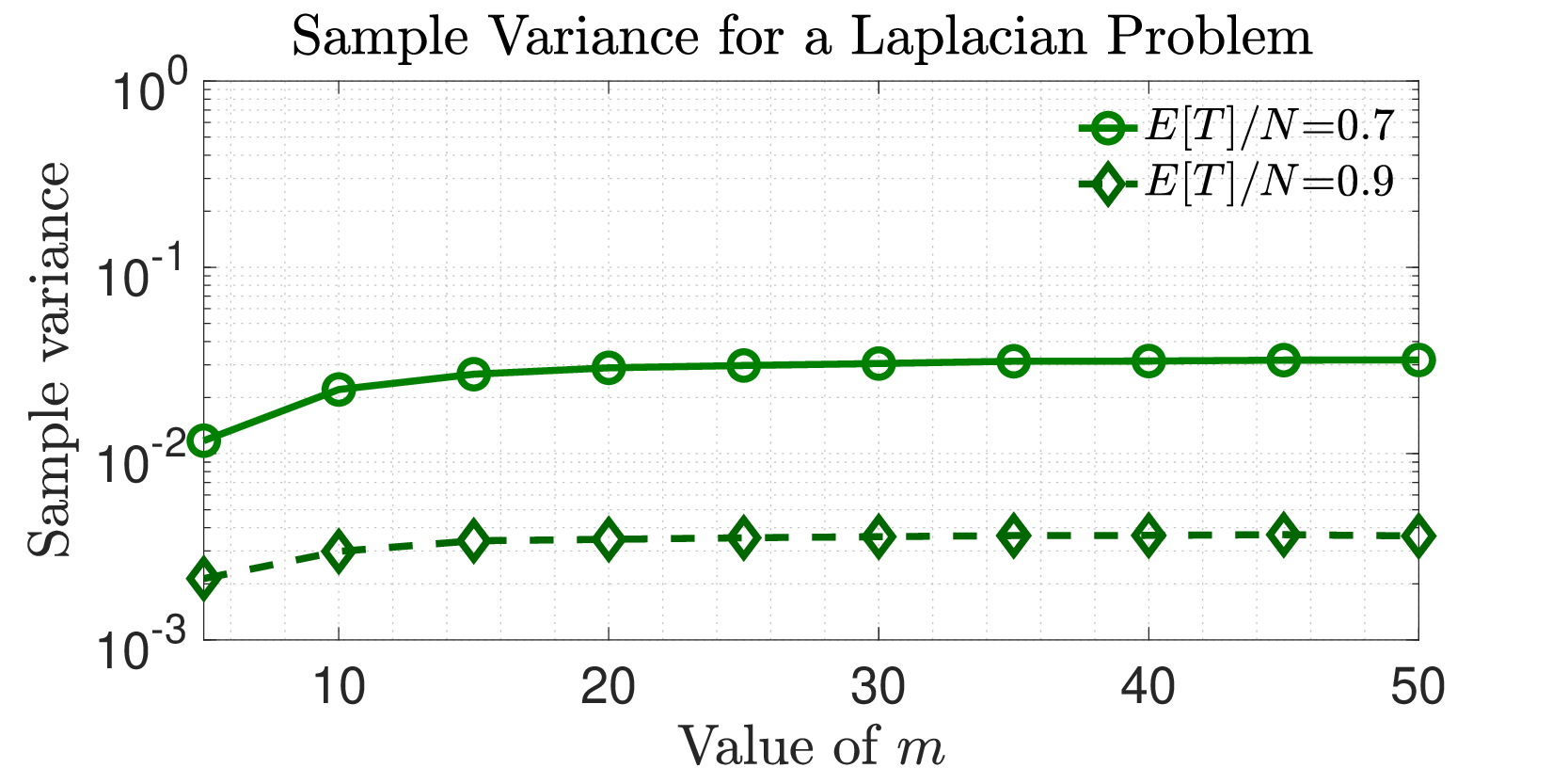}  \vspace*{-5mm}
    \caption{Average sample variance of the entries of $\widehat{z}_m$ for a $N=10^3$ model Laplacian problem.}\label{figm0}
\end{figure}
Figure \ref{figm0} plots the average sample variance of the entries of 
$\widehat{z}_m$ obtained via averaging over five hundred repetitions of 
Algorithm \ref{alg:richardson} for a matrix problem stemming from a 
Laplacian equation with Dirichlet boundary conditions discretized on a 
$10\times 10\times 10$ grid with a 7-pt stencil. The value of $m$ is varied 
from five to fifty, and we perform the same experiment for the values 
$\mathbb{E}[T]/N=0.7$ and $\mathbb{E}[T]/N=0.9$. The curves agree with our earlier 
discussion in which an increase in the value of $m$ generally leads to an increase 
in the magnitude of the diagonal entries of $\mathrm{Cov}(\widehat{z}_m)$ up to a certain value of $m$, beyond which the increase is marginal and essentially plateaus. 

Finally, we note that the expression in \eqref{eq:covv} suggests that $\mathbb{E}[\widehat{z}_m \widehat{z}_m^{\top}]$ depends quadratically on $\omega$ and thus 
smaller values of $\omega$ might reduce the overall variance of $\widehat{z}_m$. 
Note that reducing $\omega$ slows down the convergence of classical 
Richardson iteration and the sample mean of $\widehat{z}_m$ might become an inferior approximation of 
$z$. We will demonstrate these trade-offs in greater detail in Section \ref{sec5}.

\section{Chebyshev semi-iterative method with straggling workers}\label{sec:cheby}

In this section, we consider extending Algorithm \ref{alg:richardson} to the 
stationary Chebyshev semi-iterative iteration with fixed coefficients 
\cite{golub1961chebyshev,gutknecht2002chebyshev} for solving $Az=v$ when $A$ 
is SPD. Assume the eigenvalues of $A$ are within the interval $[\alpha,\beta]$.  
Then, the stationary Chebyshev semi-iterative method generates the new iterate 
$z_m$ based on the past two iterates $z_{m-1}$ and $z_{m-2}$ via the formula 
\begin{equation} \label{eq:cheb}
    z_m = z_{m-1} + \eta(z_{m-1}-z_{m-2}) + \nu(v-Az_{m-1}),
\end{equation}
where $\eta,\ \nu \in \mathbb{R}$ are two fixed 
scalars defined as in \cite{saad2003iterative}:
\begin{equation}\label{eq:coeffs}
\eta = \rho^2, \quad \nu = \frac{2\rho}{\delta},
\end{equation}
and 
\[
\rho = \frac{\alpha+\beta}{\beta-\alpha} -\sqrt{\left(\frac{\alpha+\beta}{\beta-\alpha}\right)^2-1} \ \text{and} \ \delta = \frac{\alpha+\beta}{2}.
\]
The iteration described above is commonly referred to as ``second-order 
iteration". A significant advantage of the Chebyshev 
iteration is that it does not require any inner products, making it 
particularly well-suited for distributed computing environments. The stationary iteration defined by (\ref{eq:cheb}) can also converge faster 
than the standard Richardson iteration applied to $Az=v$. 

The iteration \eqref{eq:cheb} can be recast in a $2\times 2$ block form:
\begin{equation} \label{cheb3}
    \begin{aligned}
    \underbrace{\begin{bmatrix}
    z_{m}\\
    z_{m-1}
    \end{bmatrix}}_{d_m}
    & =
    \underbrace{\begin{bmatrix}
    (1+\eta)I- \nu A& -\eta I\\
    I&0\\
    \end{bmatrix}}_{C}
    \underbrace{\begin{bmatrix}
    z_{m-1}\\
    z_{m-2}
    \end{bmatrix}}_{d_{m-1}}
    +
    \underbrace{\nu\begin{bmatrix}
     v\\
    0
    \end{bmatrix}}_{b}.
\end{aligned}
\end{equation}
This system is of size $2N \times 2N$ and thus is twice as big as the original 
linear system $Az=v$. Moreover, even when $A$ is symmetric, the coefficient 
matrix ${C}$ in (\ref{cheb2}) is non-symmetric. The form \eqref{cheb3} will 
merely be used for analyzing the convergence.

To derive the {straggler-tolerant} version of \eqref{eq:cheb}, 
we first rearrange the terms on the right-hand side of \eqref{eq:cheb} in the 
following way:
\begin{equation}\label{cheb1}
\begin{aligned} 
    z_m  = z_{m-1} + \eta(z_{m-1}-z_{m-2})+ \nu v - \nu Az_{m-1}.
\end{aligned}
\end{equation}
Notice that only the last term on the right-hand side of the above equation 
involves the matrix-vector product associated with the $N\times N$ matrix $A$. 
Based on our discussion in the previous section, we propose the following 
Chebyshev iteration scheme:
\begin{equation}
\label{eq:randomcheb}
    \widehat{z}_m = \widehat{z}_{m-1} + \eta(\widehat{z}_{m-1}-\widehat{z}_{m-2})+ \nu v - \widehat{\nu} {\cal D}_{{\cal T}_m}A\widehat{z}_{m-1},
\end{equation}
which stems from replacing the classical matrix-vector product with the 
model defined in (\ref{eq:mvm0}) for a random row subset ${\cal D}_{{\cal T}_m}$, 
and  $\widehat{\nu}=\frac{N}{\mathbb{E}[T]}\nu\in \mathbb{R}$. Note that 
the expression of $\widehat{z}_m$ now involves the scalars $\{\eta,\nu,\widehat{\nu}\}$
instead of $\{\eta,\nu\}$. The corresponding $2\times 2$ block form is as follows:

\begin{equation} \label{cheb2}
\begin{aligned}
    \underbrace{\begin{bmatrix}
    \widehat{z}_{m}\\
    \widehat{z}_{m-1}
    \end{bmatrix}}_{\widehat{d}_m}
    & =
    \underbrace{\begin{bmatrix}
    (1+\eta)I-\widehat{\nu} {\cal D}_{{\cal T}_m}A& -\eta I\\
    I&0\\
    \end{bmatrix}}_{\widehat{C}_m}
    \underbrace{\begin{bmatrix}
    \widehat{z}_{m-1}\\
    \widehat{z}_{m-2}
    \end{bmatrix}}_{\widehat{d}_{m-1}}
    +
    \underbrace{\nu\begin{bmatrix}
     v\\
    0
    \end{bmatrix}}_{b}.
\end{aligned}
\end{equation}
Similar to the case of Richardson iteration, our goal is to show that the iterate $\widehat{d}_m$ 
associated with the extended $2\times 2$ block system in (\ref{cheb2}) converges -in expectation- to 
the same stationary point that the classical $2\times 2$ block form extension 
in (\ref{cheb1}) does. 

\begin{theorem}\label{thm2}
    Assume $A$ is SPD and $\eta, \nu$ are set based on \eqref{eq:coeffs}. Then, we have
       \begin{equation*}
        d_m = C^m d_0 + \left[C^{m-1} +\cdots +C + I\right] b,
    \end{equation*}
    and 
        \begin{equation*}
        \widehat{d}_m =   \prod\limits_{i=1}^{m} \widehat{C}_i \widehat{d}_0 +\left[\prod\limits_{i=2}^{m}\widehat{C}_i +\cdots+ \widehat{C}_m+I\right]b,
    \end{equation*}
    for the iterations \eqref{cheb3} and \eqref{cheb2}, respectively.
    Moreover, if $\widehat{d}_0=d_0=f$ for some $f\in 
    \mathbb{R}^{2N}$, then
    \begin{equation*}
        \mathbb{E}\left[\widehat{d}_m\right]=d_m.
    \end{equation*}
\end{theorem}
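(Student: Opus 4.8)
The plan is to treat the $2\times 2$ block recurrences \eqref{cheb3} and \eqref{cheb2} as ordinary first-order stationary iterations in the $2N$-dimensional variables $d_m$ and $\widehat{d}_m$, thereby reducing the entire argument to the template already established in Lemma \ref{lem2} and Theorem \ref{thm1}. The two block forms read $d_m = C d_{m-1} + b$ and $\widehat{d}_m = \widehat{C}_m \widehat{d}_{m-1} + b$, which are structurally identical to the Richardson updates $z_m = (I-\omega A)z_{m-1}+\omega v$ and $\widehat{z}_m = (I-\widehat{\omega}D_{{\cal T}_m}A)\widehat{z}_{m-1}+\omega v$, with the constant matrix $I-\omega A$ replaced by $C$, the random matrix $I-\widehat{\omega}D_{{\cal T}_m}A$ replaced by $\widehat{C}_m$, and the constant term $\omega v$ replaced by $b$. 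Crucially, the source term $b$ uses $\nu$ (not $\widehat{\nu}$) and is therefore identical in both iterations, so no extra bookkeeping on the source is needed.

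First I would establish the two explicit formulas by the same induction used in Lemma \ref{lem2}: expanding $\widehat{d}_1,\widehat{d}_2,\widehat{d}_3$ shows that each step left-multiplies all accumulated terms by the new factor $\widehat{C}_i$ and appends a fresh $b$, producing $\widehat{d}_m = \prod_{i=1}^m \widehat{C}_i\,\widehat{d}_0 + \left[\prod_{i=2}^m \widehat{C}_i + \cdots + \widehat{C}_m + I\right]b$; the classical case is obtained verbatim with every $\widehat{C}_i$ replaced by the constant $C$, giving the telescoping geometric-series form $C^{m-1}+\cdots+C+I$.

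Next I would compute $\mathbb{E}[\widehat{C}_m]$. The only random entry of $\widehat{C}_m$ is its top-left block $(1+\eta)I-\widehat{\nu}{\cal D}_{{\cal T}_m}A$, since the lower block row $[\,I\ \ 0\,]$ and the off-diagonal block $-\eta I$ are deterministic. Applying Lemma \ref{lem1} (equivalently, $\mathbb{E}[D_{\cal T}A]=\tfrac{\mathbb{E}[T]}{N}A$ as noted in the proof of Proposition \ref{pro121}) together with the choice $\widehat{\nu}=\tfrac{N}{\mathbb{E}[T]}\nu$ yields $\widehat{\nu}\,\mathbb{E}[{\cal D}_{{\cal T}_m}]A = \nu A$, hence $\mathbb{E}[\widehat{C}_m]=C$. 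Because the row subsets ${\cal T}_1,\ldots,{\cal T}_m$ are sampled independently, the matrices $\widehat{C}_i$ are independent, so the expectation of any ordered product factorizes despite noncommutativity: $\mathbb{E}\!\left[\prod_{i=j}^m \widehat{C}_i\right]=\prod_{i=j}^m \mathbb{E}[\widehat{C}_i]=C^{\,m-j+1}$. Taking expectations of the explicit formula for $\widehat{d}_m$ term by term then replaces each product by the corresponding power of $C$, and with $\widehat{d}_0=d_0=f$ the expression collapses exactly to the classical formula for $d_m$, giving $\mathbb{E}[\widehat{d}_m]=d_m$.

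The argument presents no genuinely hard step, as it reuses the Richardson machinery almost verbatim; the only point requiring care is the noncommutative factorization of the expected product, which is legitimate precisely because the $\widehat{C}_i$ are i.i.d.\ across iterations — the same independence invoked in Theorem \ref{thm1}. If I had to flag a potential obstacle, it would be confirming that the block structure confines all randomness to a single block of $\widehat{C}_m$, so that Lemma \ref{lem1} applies blockwise without modification and the deterministic source term $b$ is left untouched by the expectation.
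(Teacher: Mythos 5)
Your proposal is correct and follows essentially the same route as the paper: both reduce the block recurrences to the first-order template of Lemma \ref{lem2} and Theorem \ref{thm1}, use independence to factor $\mathbb{E}\bigl[\prod_{i=j}^{m}\widehat{C}_i\bigr]$ into $\prod_{i=j}^{m}\mathbb{E}[\widehat{C}_i]$, and compute $\mathbb{E}[\widehat{C}_i]=C$ blockwise via $\mathbb{E}[{\cal D}_{{\cal T}_i}A]=\tfrac{\mathbb{E}[T]}{N}A$ and the choice $\widehat{\nu}=\tfrac{N}{\mathbb{E}[T]}\nu$. No gaps.
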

\begin{proof}
The proof follows the same logic as in Theorem \ref{thm1}. The only additional 
consideration is the computation of the expectation 
$\mathbb{E}\left[\prod\limits_{i=j}^{m}\widehat{C}_i\right]$. Due to 
the independence of each sample, once again we can write 
\begin{equation*}
    \mathbb{E}\left[\prod\limits_{i=j}^{m}\widehat{C}_i\right] = 
    \mathbb{E}\left[\widehat{C}_j\cdots \widehat{C}_m\right]
    = \prod\limits_{i=j}^{m} \mathbb{E}\left[\widehat{C}_i\right].
\end{equation*}
Each term $\mathbb{E}\left[\widehat{C}_i\right]$ can be then written as
\begin{align*}
    \mathbb{E}\left[\widehat{C}_i\right] &= 
    \mathbb{E}\Biggl[\begin{bmatrix}
    (1+\eta)I-\widehat{\nu} {\cal D}_{{\cal T}_{i}}A& -\eta I\\
    I&0\\
    \end{bmatrix}\Biggr]\\
    &= 
    \begin{bmatrix}
    \mathbb{E}\left[(1+\eta)I-\widehat{\nu} {\cal D}_{{\cal T}_i}A\right]& -\mathbb{E}\left[\eta I\right]\\
    \mathbb{E}\left[I\right]&\mathbb{E}\left[0\right]\\
    \end{bmatrix}\\
   &= \begin{bmatrix}
    (1+\eta)I-\widehat{\nu}\mathbb{E}\left[ {\cal D}_{{\cal T}_i}A\right]& -\eta I\\
    I&0\\
    \end{bmatrix}.
\end{align*}
From Proposition \ref{pro121}, we know that $\mathbb{E}\left[ {\cal D}_{{\cal T}_i}A\right]=\frac{\mathbb{E}[T]}{N}A$. Thus, $\mathbb{E}\left[\widehat{C}_i\right]=C$, which concludes the first part. The second part follows directly from Theorem \ref{thm1}.
\end{proof}

\section{Numerical Experiments} \label{sec5}

In this section, we illustrate the numerical performance of {straggler-tolerant} 
Richardson iteration (Algorithm \ref{alg:richardson}) and {straggler-tolerant} 
Chebyshev iteration via the update formula in \eqref{eq:randomcheb}. Our numerical experiments are conducted in a Matlab environment (version R2023b), using 64-bit 
arithmetic, on a single core of a computing system equipped with an 
Apple M1 Max processor and 64 GB of system memory. Unless mentioned 
otherwise, the right-hand side of each problem will be set equal to the 
matrix-vector product between the matrix $A$ and the vector of all ones 
and the initial approximation is set as a zero vector for all problems. 

Throughout our experiments, we sample the number $T$ of observed entries 
per matrix-vector product (\ref{eq:mvm0}) uniformly from the interval 
$\left[\mathbb{E}[T]-100,\mathbb{E}[T]+100\right]$. We represent the ratio 
of the expectation of $T$ over the problem dimension $N$ by the scalar 
$\tau \in \mathbb{R}$:
\begin{equation*}
    \tau = \dfrac{\mathbb{E}[T]}{N},
\end{equation*}
and the optimal scalar parameter in classical Richardson iteration by the 
scalar $\omega_{\rm CR}$:
\begin{equation*}
    \omega_{\rm CR}=\frac{2}{\lambda_1+\lambda_N}.
\end{equation*}

Let $z_m$ denote the iterate returned by performing
$m$ steps of classical Richardson iteration to solve the
linear system $Az=v$ and $\widehat{z}_m^{(i)}$
denote the output during the $i$th execution of  Algorithm 
\ref{alg:richardson} with $m$ iterations. Our experimental results focus primarily on the approximation of the 
following two quantities:
\begin{itemize}
\item  Approximation error of $\frac{1}{L}\sum_{i=1}^L\widehat{z}_m^{(i)}$ to $z_m$, which is measured by the Mean Squared Error (MSE) of the vector $\frac{1}{L}\sum_{i=1}^L\widehat{z}_m^{(i)}-z_m$.
\item Approximation error of 
$\frac{1}{L}\sum_{i=1}^L\widehat{z}_m^{(i)}$ to $z$, where is measured by the MSE of $\frac{1}{L}\sum_{i=1}^L\widehat{z}_m^{(i)}-z$.
\end{itemize}

\subsection{Illustration of Algorithm \ref{alg:richardson}} \label{secnum1}

In this section, we consider the MSE achieved by classical and {straggler-tolerant} 
Richardson iteration on a set of sparse and model problems. For classical Richardson iteration we only consider the optimal parameter value $\omega_{\rm CR}$. For Algorithm \ref{alg:richardson} we consider both $\widehat{\omega} = \frac{N}{\mathbb{E}[T]}\omega_{\rm CR}$ and 
${\omega}=\omega_{\rm CR}$ as the scalar parameter associated with the matrix-vector products.

\subsubsection{Performance on general sparse matrices}

Figure \ref{fig1} plots the MSE of the error vector 
$\frac{1}{L}\sum_{i=1}^L\widehat{z}_m^{(i)}-z_m$ 
when the scalar parameter associated with matrix-vector products is set as $\omega$ (dashed line) and $\widehat{\omega}$ (dashed-dotted line) as 
well as the MSE of the error vector $\frac{1}{L}\sum_{i=1}^L\widehat{z}_m^{(i)}-z$ (solid line). As the test matrix, we choose the \textsc{crystm01} sparse matrix from SuiteSparse\footnote{\url{https://sparse.tamu.edu/}} \cite{davis2011university} matrix collection. This matrix has size $N=4,875$ and 105,339 non-zero entries. We perform experiments for the values $\tau \in\{0.6,0.75,0.9\}$ and $m\in \{20,50\}$. 
Recall now that the MSE of $\frac{1}{L}\sum_{i=1}^L\widehat{z}_m^{(i)}-z_m$ measures how well the mean of the samples $\widehat{z}_m^{(i)}$ produced by Algorithm \ref{alg:richardson} 
approximates the vector $z_m$. Therefore, increasing the number of samples can close the gap between $\frac{1}{L}\sum_{i=1}^L\widehat{z}_m^{(i)}$ and $z_m$ as indicated by the decrease of the red dashed-dotted line as we move rightwards on the real axis regardless of the value of $m$. 
On the other hand, reducing the MSE of $\frac{1}{L}\sum_{i=1}^L\widehat{z}_m^{(i)}-z$ generally requires that $z_m$ is also a good approximation of $z$. This is the reason why for $m=20$ we see the approximation to $z_m$ by $\frac{1}{L}\sum_{i=1}^L\widehat{z}_m^{(i)}$ 
improves as a function of the total number of samples $L$ while the approximation to $z$ decreases 
very slowly. On the other hand, when $m=50$, $z_m$ is a more accurate approximation to $z$ and $\frac{1}{L}\sum_{i=1}^L\widehat{z}_m^{(i)}$ converges towards this more accurate approximation. 
As a result, the MSE of $\frac{1}{L}\sum_{i=1}^L\widehat{z}_m^{(i)}-z$ reduces at about the same 
rate as the MSE of $\frac{1}{L}\sum_{i=1}^L\widehat{z}_m^{(i)}-z_m$. It is important to notice that 
the above behavior holds only when the scalar parameter is equal to $\widehat{\omega}$. 
If instead, one replaces $\widehat{\omega}$ by $\omega_{\rm CR}$, the MSE essentially stagnates due to unresolved residuals as indicated in Remark \ref{myrem}.

Figure \ref{fig2} plots the same results for the sparse matrix \textsc{bundle1} with 
size $N=10,581$ and 770,811 non-zero entries. Similarly, increasing 
the value of either $\tau$ or $L$ reduces the MSE $\frac{1}{L}\sum_{i=1}^L\widehat{z}_m^{(i)}-z_m$ if 
the scalar parameter is equal to $\widehat{\omega}$. Moreover, again in agreement 
with the above results, increasing the value of $m$ makes the MSE of 
$\frac{1}{L}\sum_{i=1}^L\widehat{z}_m^{(i)}-z_m$ essentially track that of $\frac{1}{L}\sum_{i=1}^L\widehat{z}_m^{(i)}-z$. 

\begin{figure}
\centering
    \includegraphics[width=0.32\textwidth]{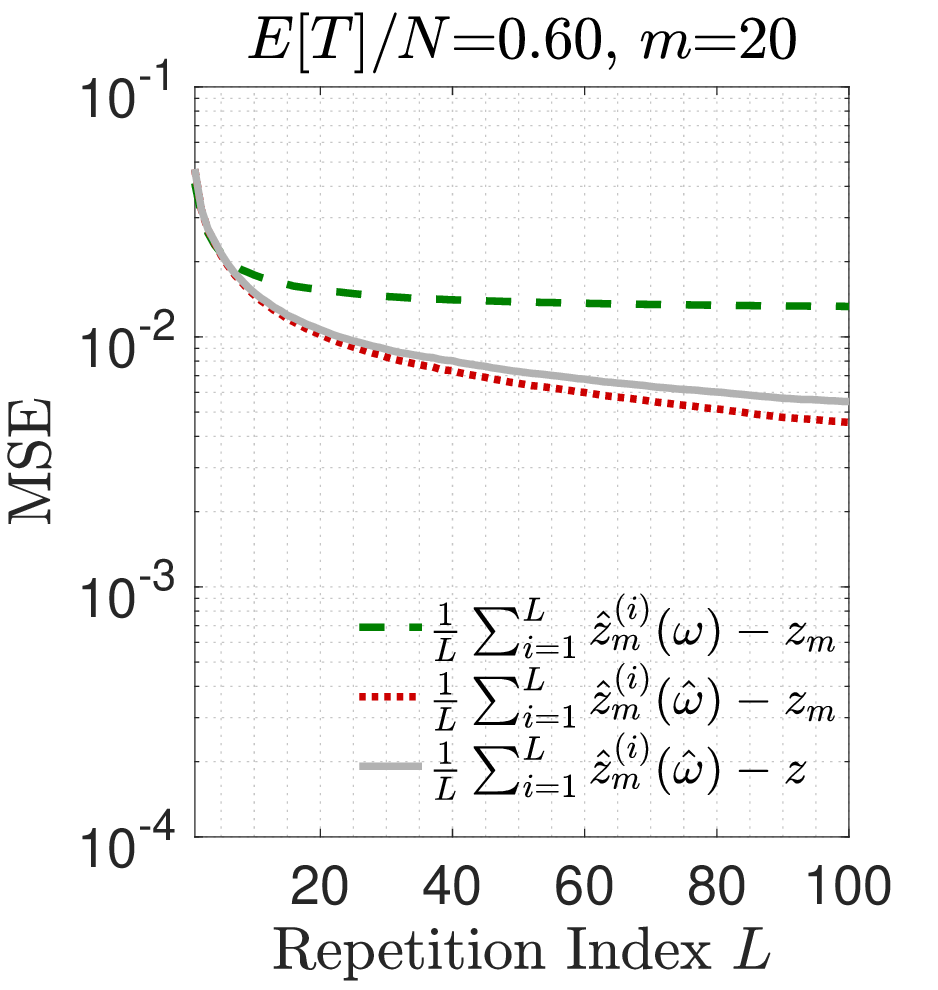}   
    \includegraphics[width=0.32\textwidth]{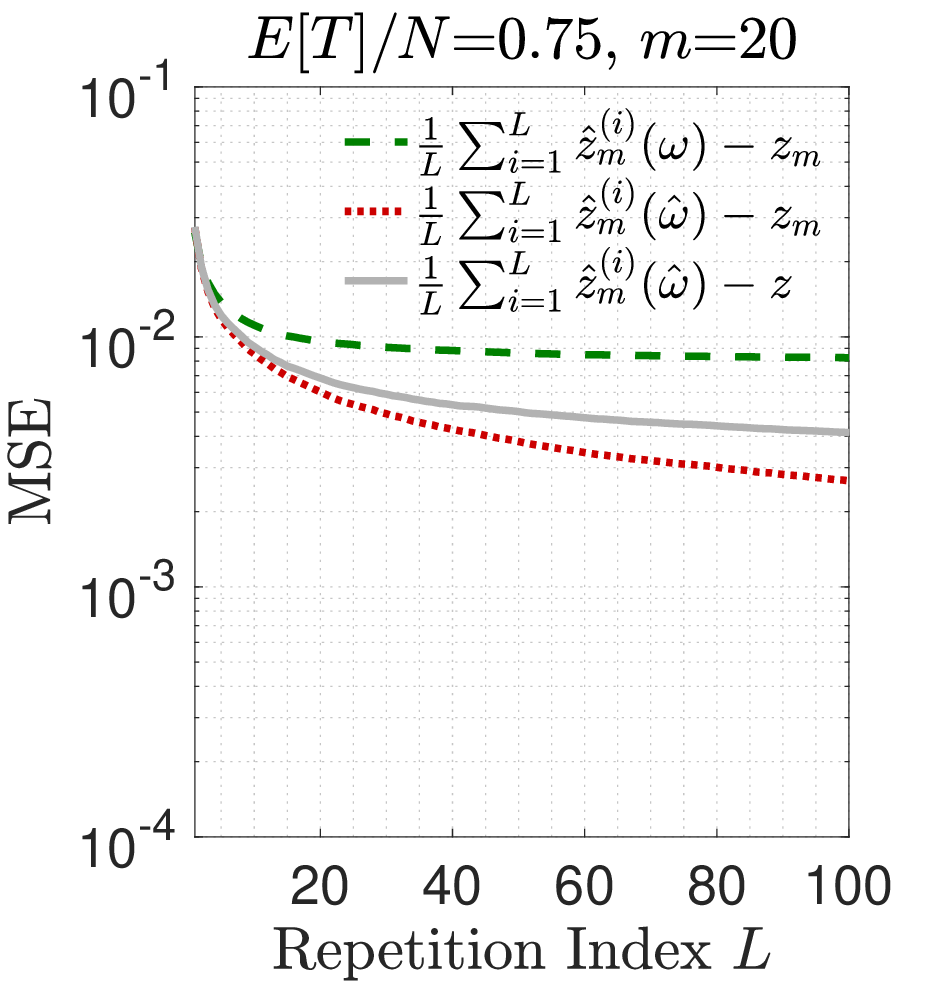}   
    \includegraphics[width=0.32\textwidth]{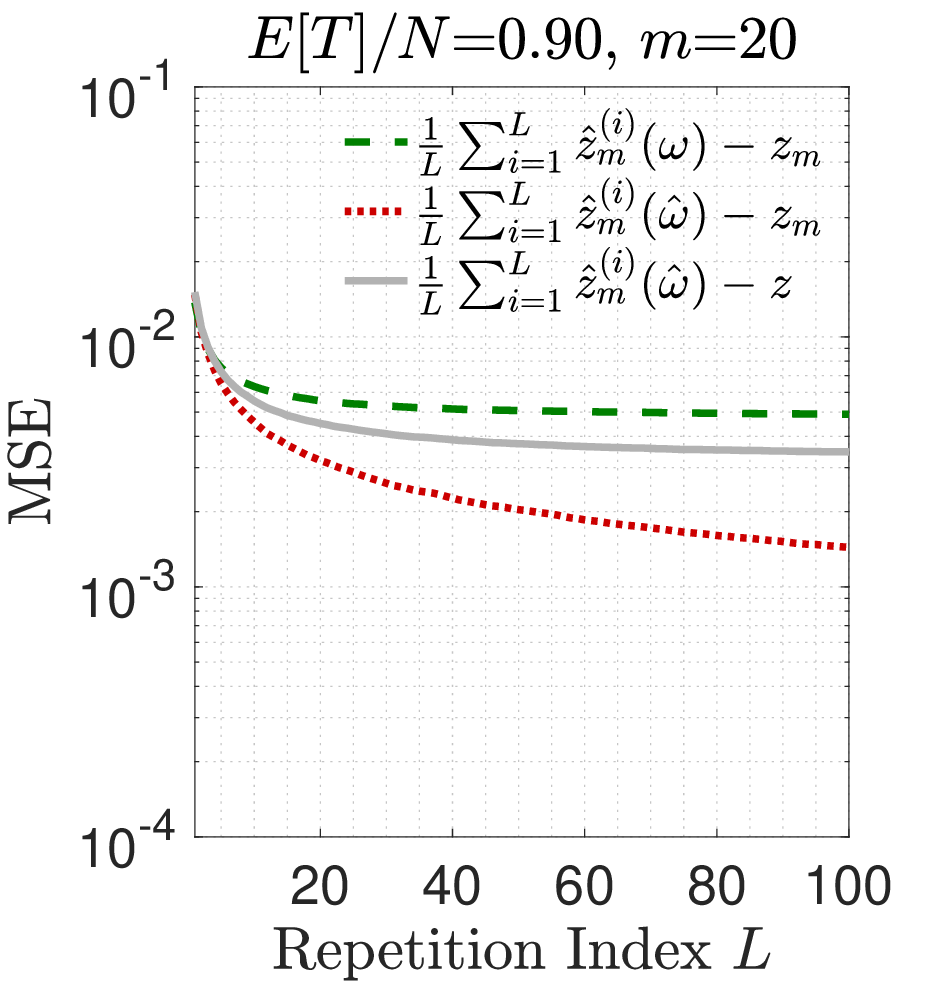}   
    \includegraphics[width=0.32\textwidth]{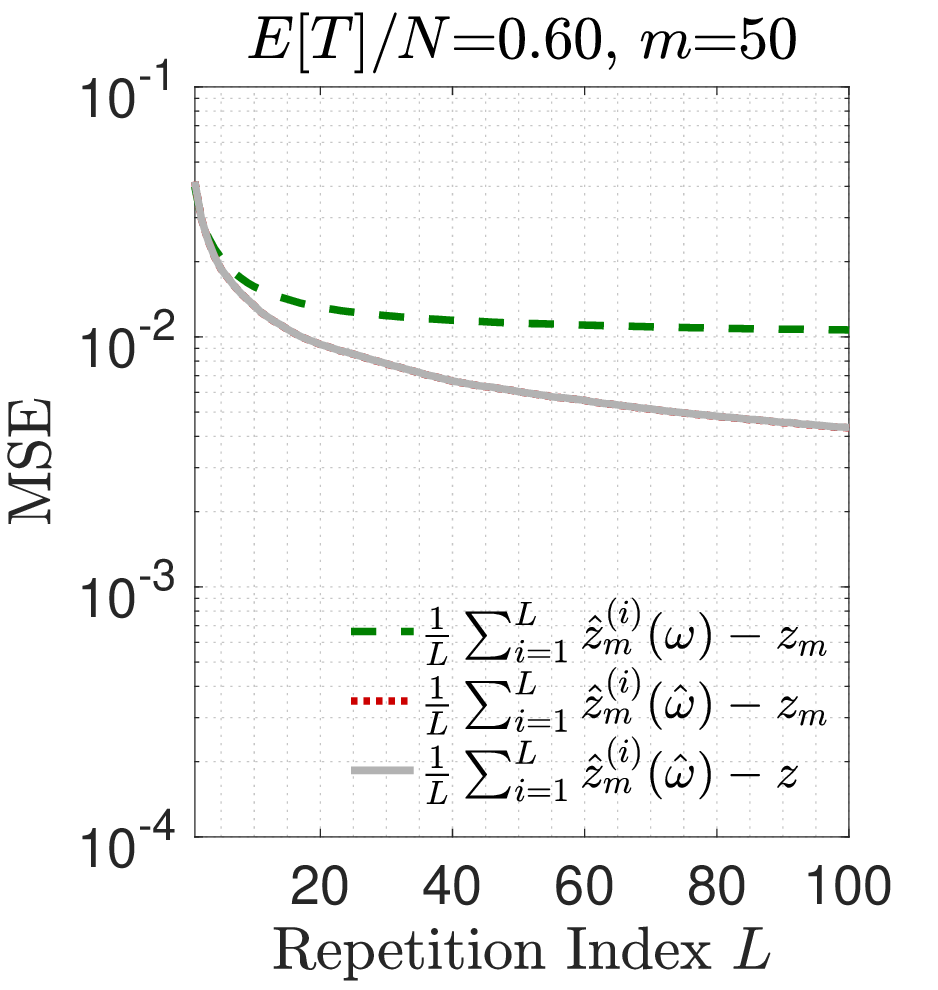}   
    \includegraphics[width=0.32\textwidth]{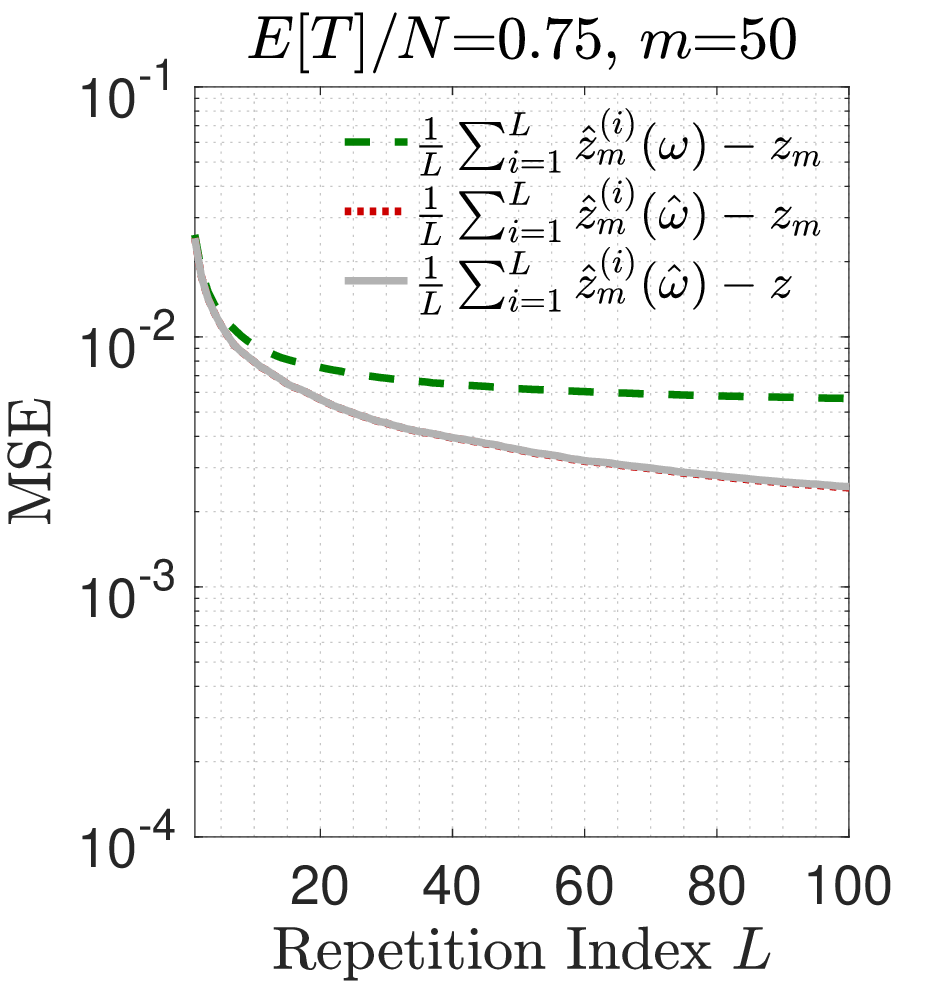}   
    \includegraphics[width=0.32\textwidth]{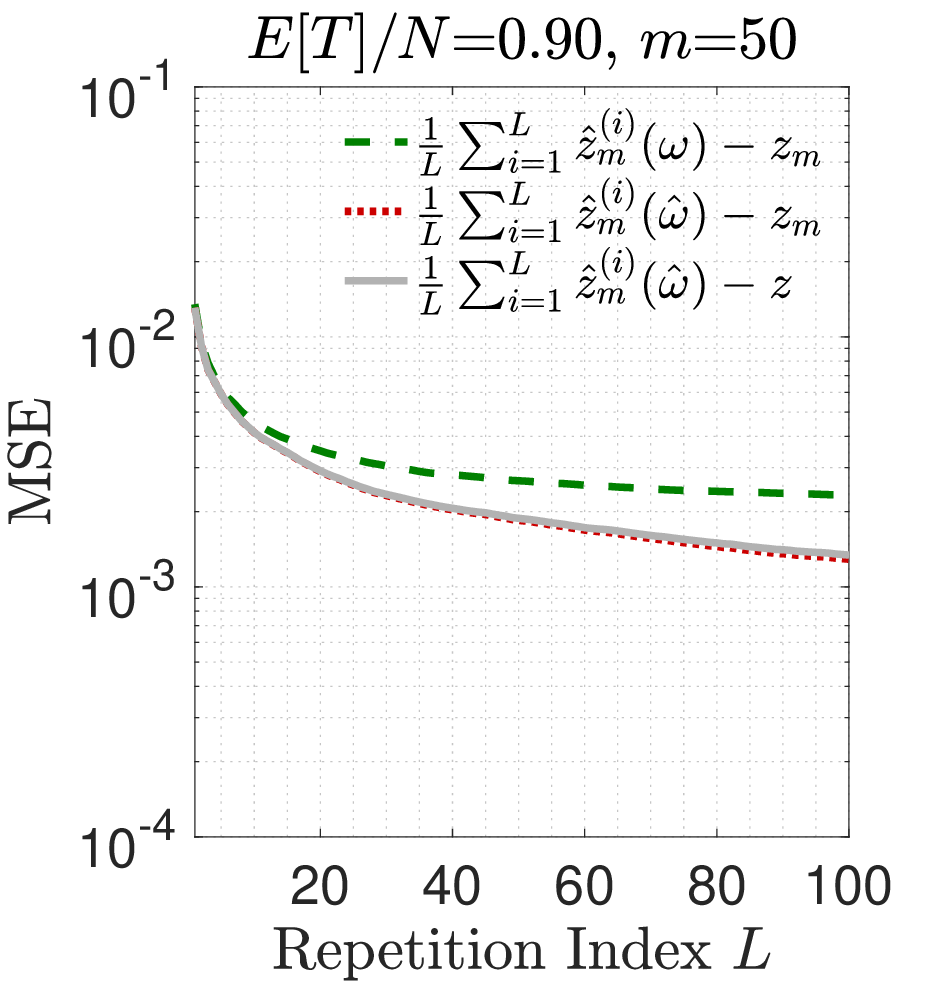}   
    \caption{MSE of $\frac{1}{L}\sum_{i=1}^L\widehat{z}_m^{(i)}-z_m$ on up to $L=100$ trials of Algorithm \ref{alg:richardson} for the matrix \textsc{crystm01} as well as   MSE of the quantity $\frac{1}{L}\sum_{i=1}^L\widehat{z}_m^{(i)}-z$ where $z$ is the solution of $Az=v$.}\label{fig1}
\end{figure}

\begin{figure}
\centering
    \includegraphics[width=0.32\textwidth]{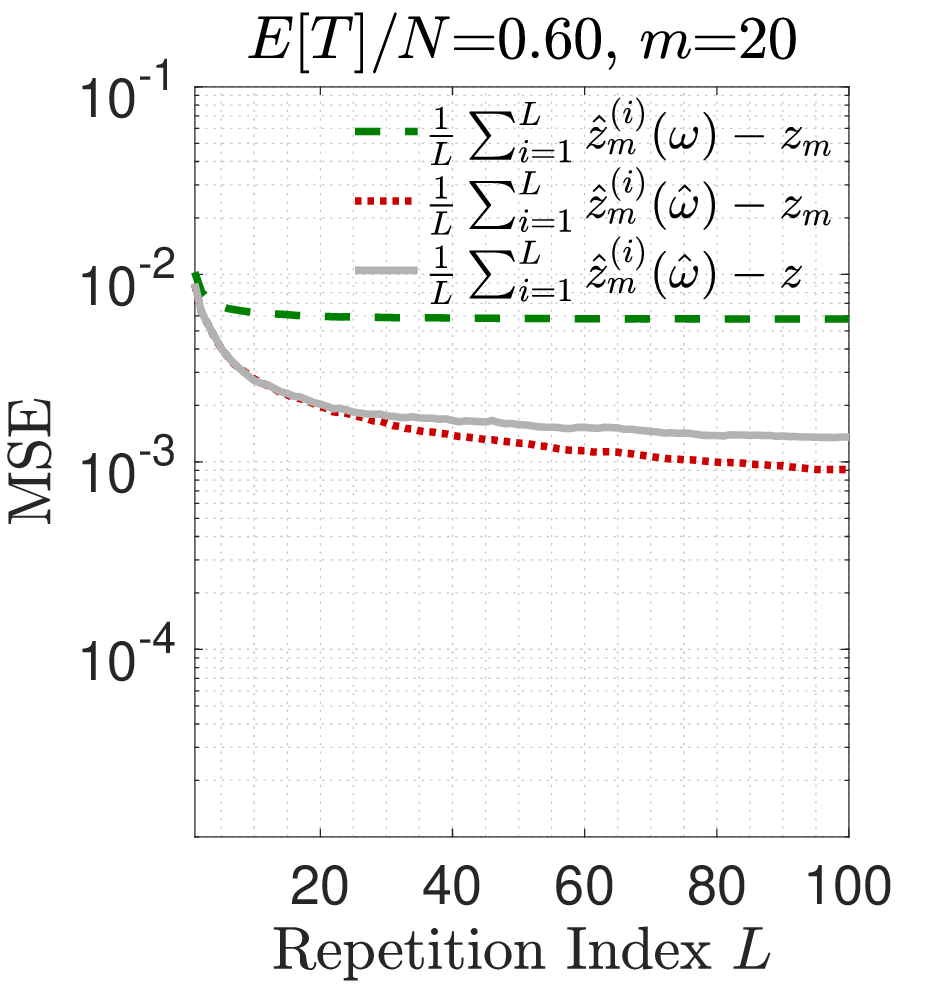}   
    \includegraphics[width=0.32\textwidth]{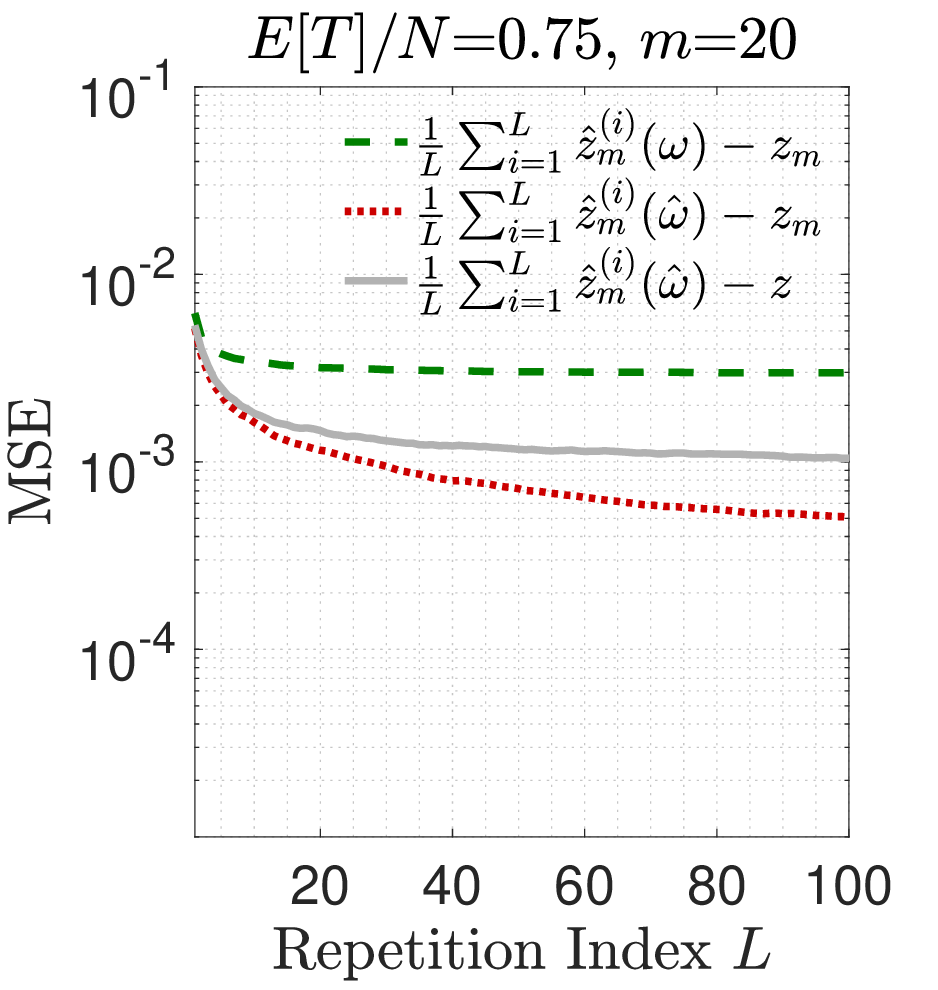}   
    \includegraphics[width=0.32\textwidth]{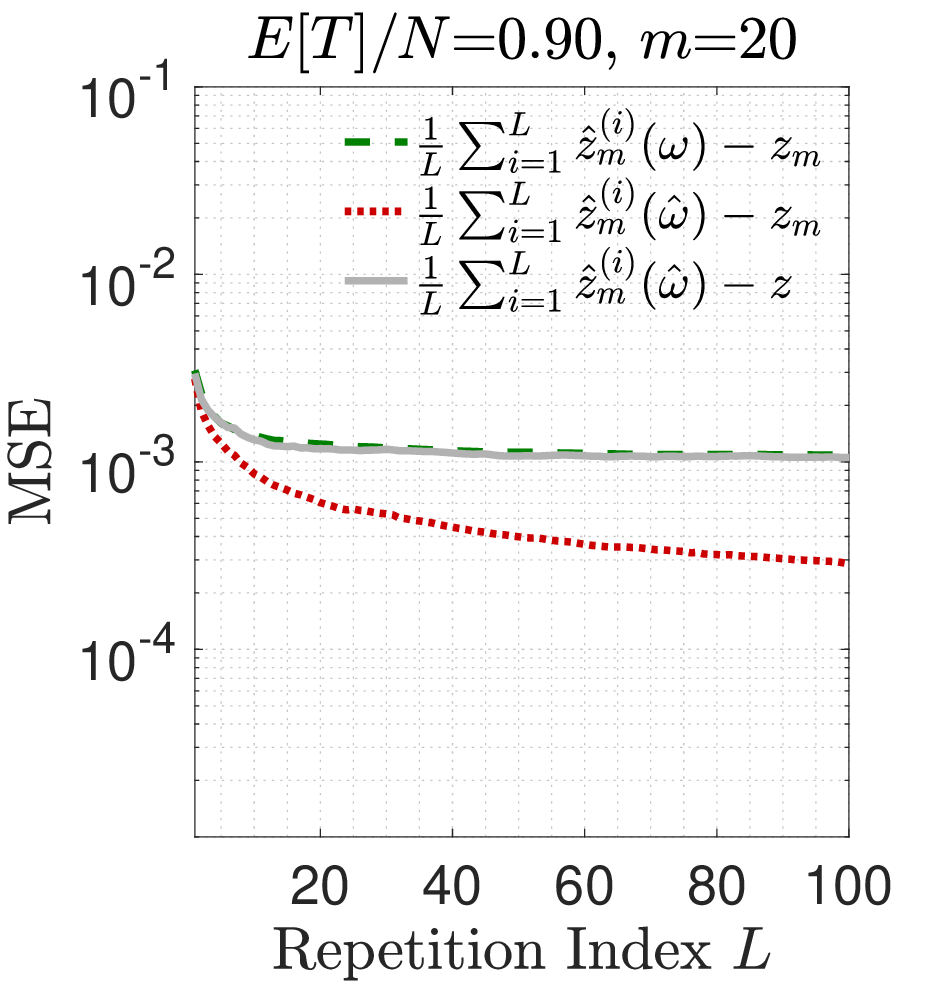}   
    \includegraphics[width=0.32\textwidth]{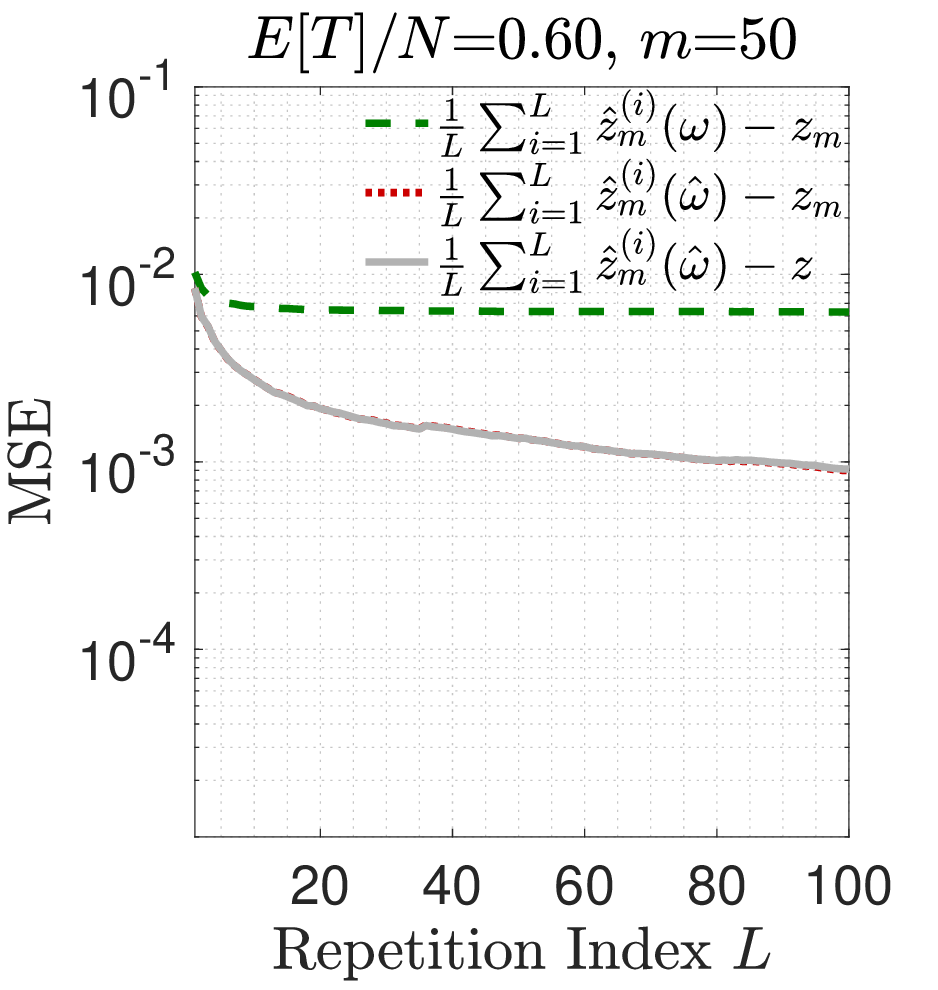}   
    \includegraphics[width=0.32\textwidth]{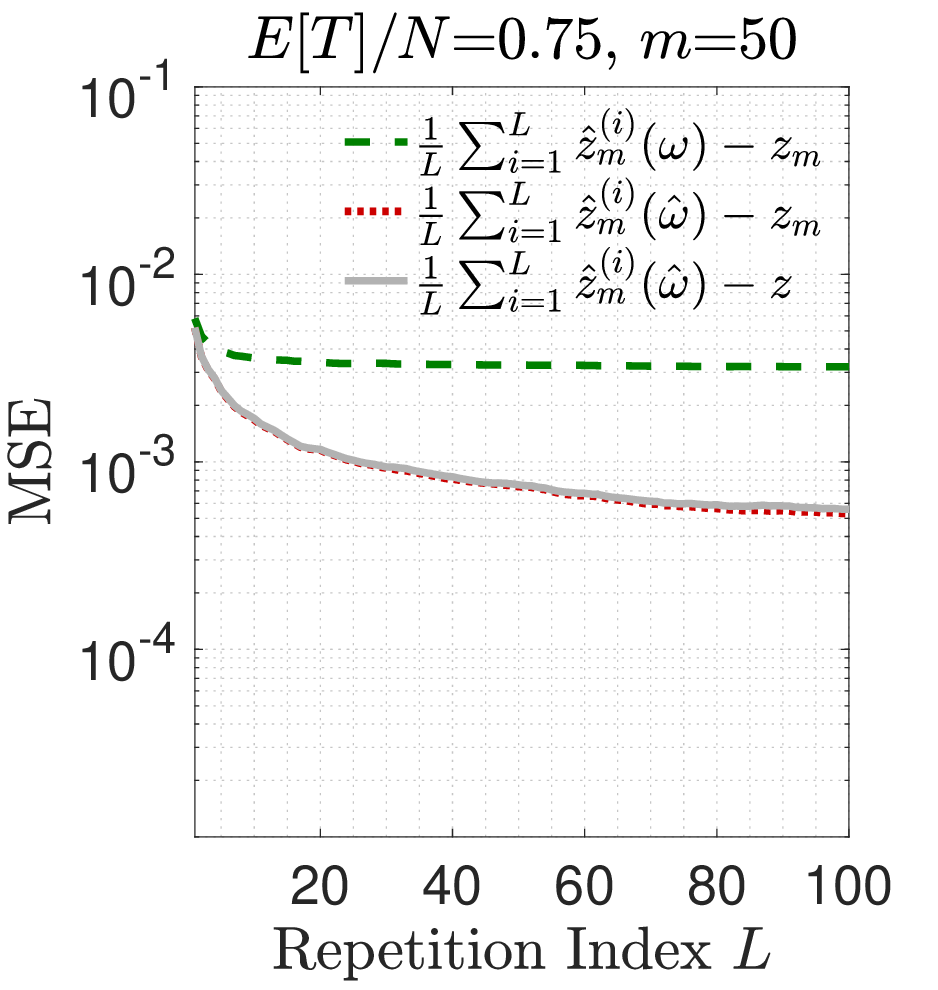}   
    \includegraphics[width=0.32\textwidth]{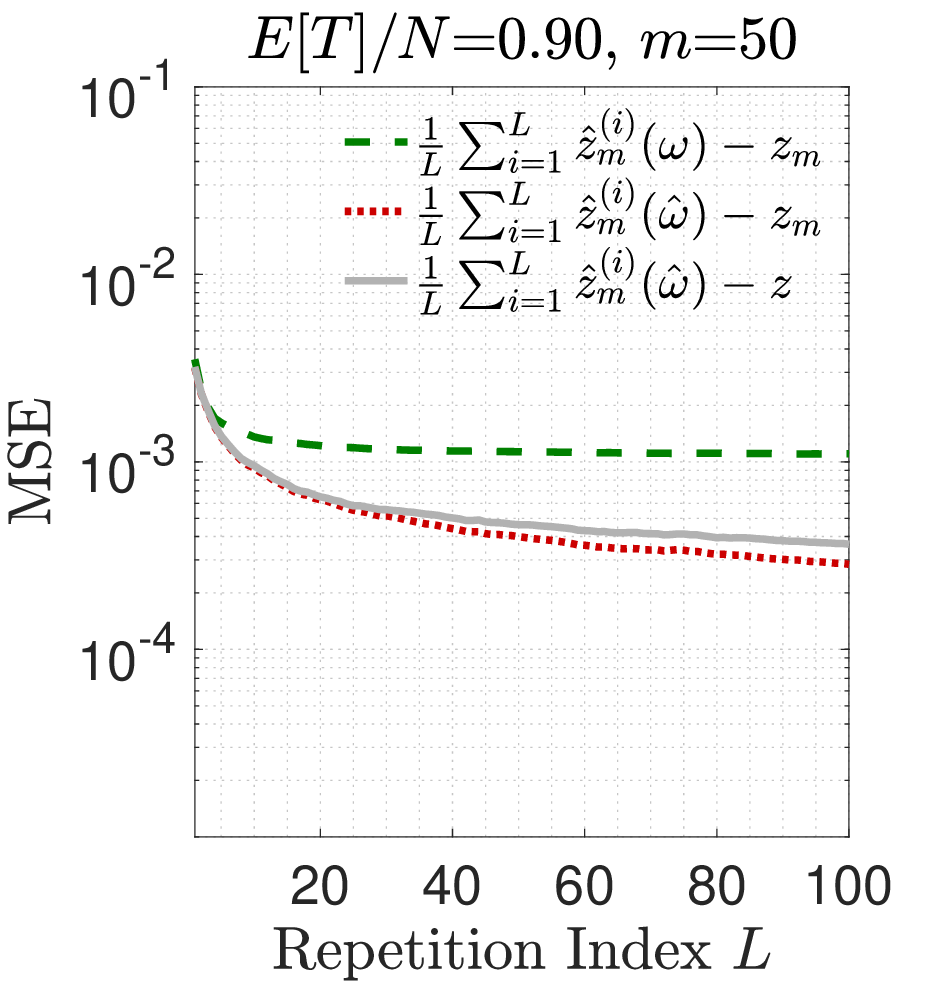}   
    \caption{MSE of $\frac{1}{L}\sum_{i=1}^L\widehat{z}_m^{(i)}-z_m$ on up to $L=100$ trials of Algorithm \ref{alg:richardson} for the matrix \textsc{bundle1} as well as  MSE of the quantity $\frac{1}{L}\sum_{i=1}^L\widehat{z}_m^{(i)}-z$ where $z$ is the solution of $Az=v$.}\label{fig2}
\end{figure}

\subsubsection{Performance on model problems}

We consider the numerical solution of Poisson's equation on the unit 
cube with Dirichlet boundary conditions:
\begin{equation} \label{eq:Dir}
\Delta f = h\ {\rm in}\ \Omega:= (0,1)\times (0,1)\times (0,1),\ \ 
u_{|\partial \Omega} = 0,
\end{equation}
where $\Delta$ denotes the Laplace operator and $\partial \Omega$ denotes the 
boundary of the unit cube. We discretize \eqref{eq:Dir} via 7-point stencil 
finite differences using the same mesh size along each dimension. The resulting 
coefficient matrix $A$ is SPD.

\begin{figure}
\centering
    \includegraphics[width=0.42\textwidth]{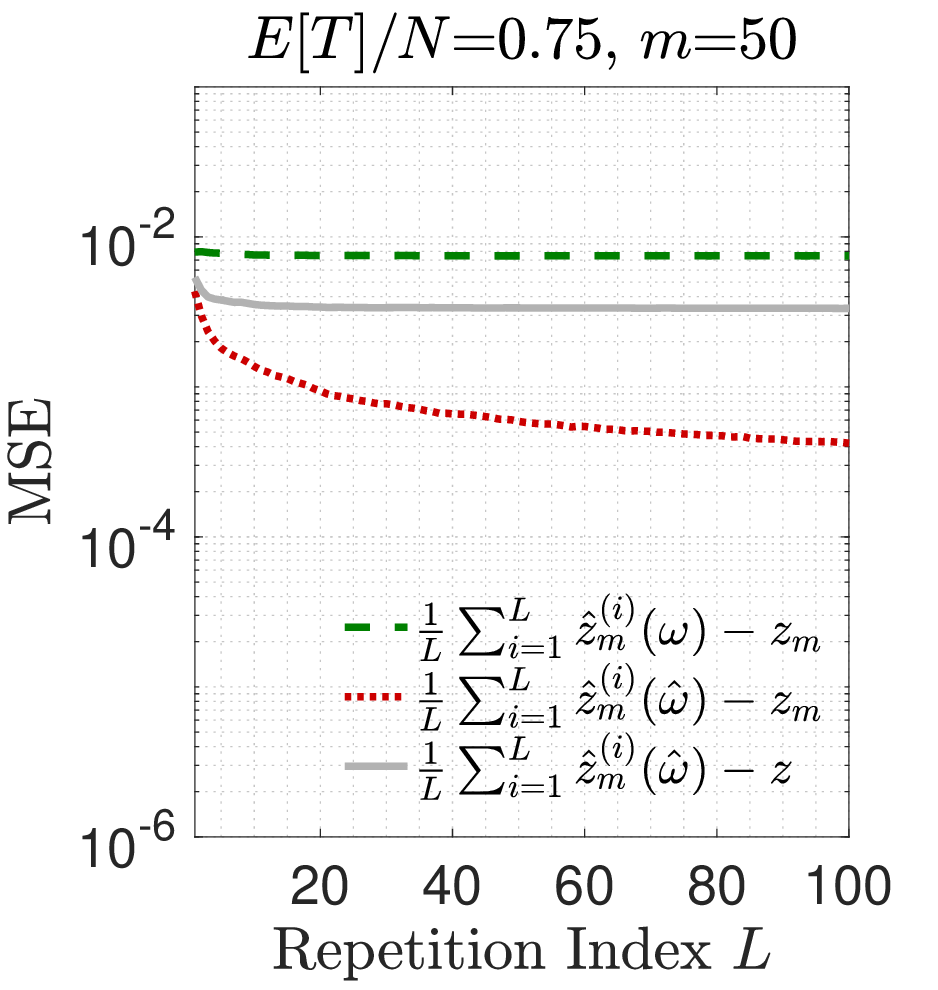}   
    \includegraphics[width=0.42\textwidth]{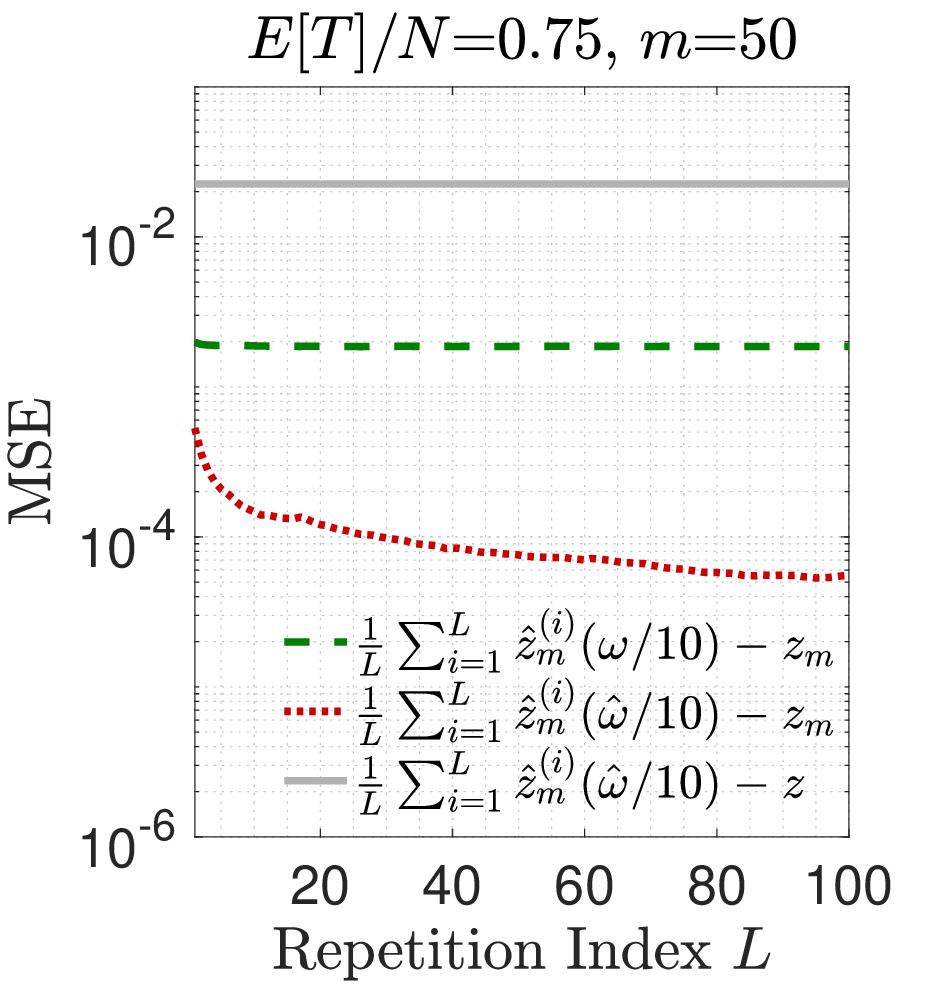}   
    \includegraphics[width=0.42\textwidth]{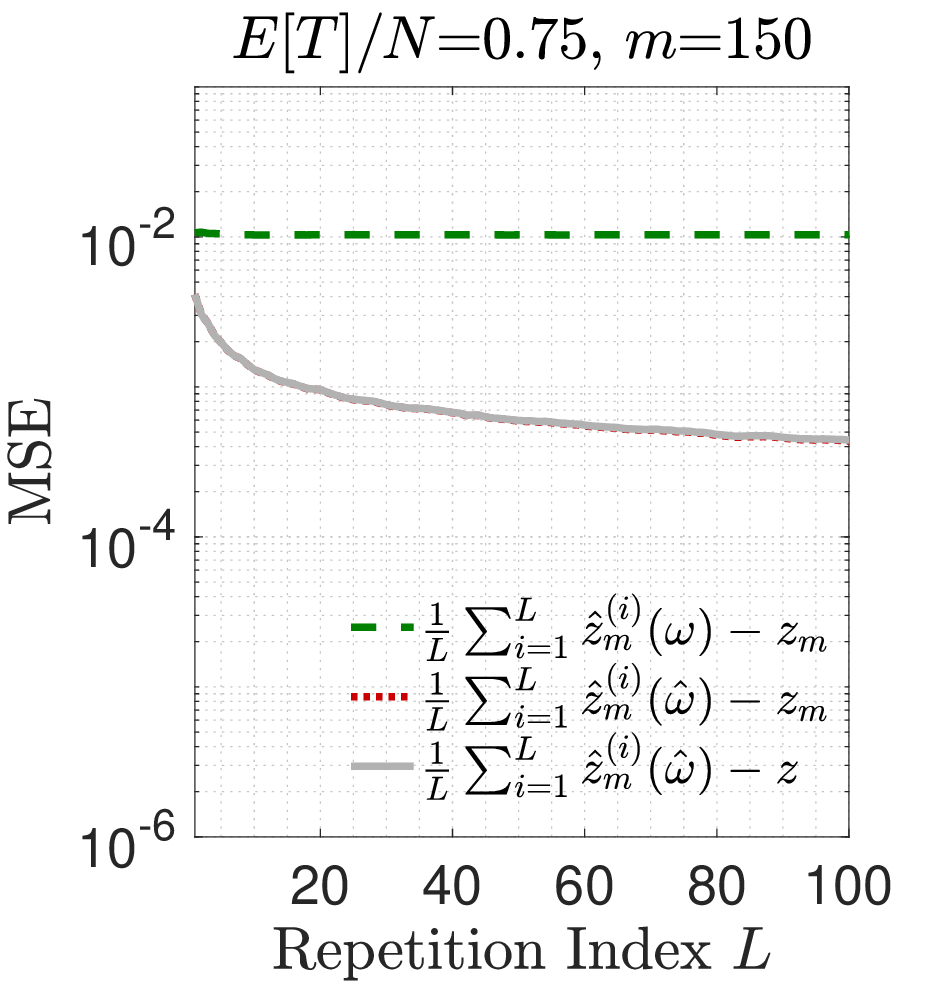}   
    \includegraphics[width=0.42\textwidth]{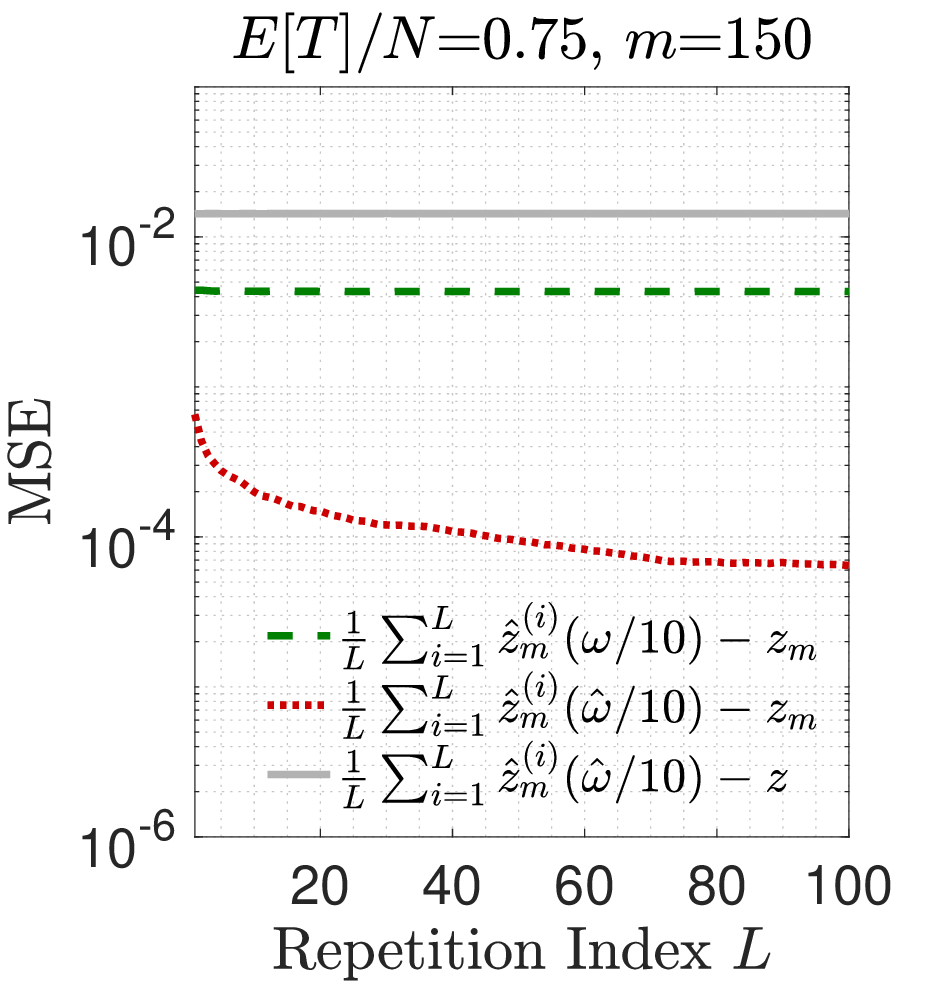}   
    \caption{Effects of reducing $\omega,\ \widehat{\omega}$ from their optimal values.}\label{fig3}
\end{figure}
The left two subfigures in Figure \ref{fig3} plot the MSE of the error vector $\frac{1}{L}\sum_{i=1}^L\widehat{z}_m^{(i)}-z_m$ 
when $\omega$ (dashed line) and $\widehat{\omega}$ (dashed-dotted line) as 
well as the MSE of the error vector $\frac{1}{L}\sum_{i=1}^L\widehat{z}_m^{(i)}-z$ (solid line) for 
the case $\tau=0.75$ and $m\in \{50,150\}$. In addition, we also 
test the dependence of the variance of $\widehat{z}_m$ on $\omega$. We repeat the experiment using the same settings 
except that now we decrease both $\omega$ and $\widehat{\omega}$ by an order of magnitude. 
The right two subfigures in Figure \ref{fig3} show that  the MSE of $\frac{1}{L}\sum_{i=1}^L\widehat{z}_m^{(i)}-z_m$ decreases for the same set of 
parameters when $\omega$ decreases. On the other hand, since classical Richardson 
iteration uses the non-optimal scalar parameter $\omega_{\rm CR}/10$ instead of 
$\omega_{\rm CR}$, the approximation of $z$ by $z_m$ is not as accurate as before. Therefore, 
although the MSE of $\frac{1}{L}\sum_{i=1}^L\widehat{z}_m^{(i)}-z_m$ decreases, the error between 
$\frac{1}{L}\sum_{i=1}^L\widehat{z}_m^{(i)}$ and $z$ actually becomes larger.

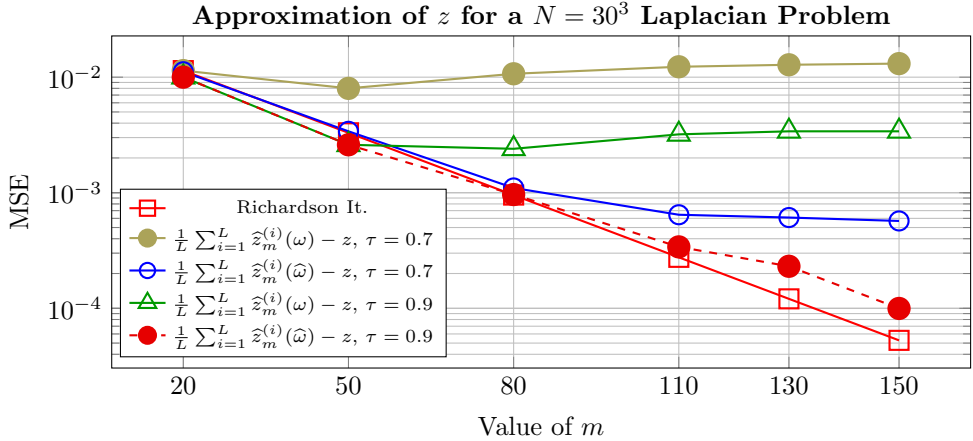
\begin{figure}
\centering
\begin{tikzpicture}
\begin{semilogyaxis}[
	xlabel={Value of $m$},
	ylabel={MSE},
        grid=both,minor tick num=1,
        every axis plot/.append style={thick},
        xtick={20,50,80,110,130,150},
        legend style={at={(0.39,0.54)},{font=\scriptsize}},
        title={{\bf Approximation of $z$ for a $N=30^3$ Laplacian Problem}},
        legend image post style={scale=0.8},
        title style={yshift=-1.5ex,},
        title style={yshift=-.1ex,},
        width=13cm, 
        height=6cm,
]
\addplot[red,mark=square,mark options=solid,mark size=3.6pt] coordinates {
(20,0.0114) (50,0.0033) (80,9.5438e-04)
	(110,2.7604e-04) (130,1.2073e-04) (150,5.2801e-05)
};

\addplot[black!40!yellow,mark=*,mark options=solid,mark size=4pt] coordinates {
(20,0.0114) (50,0.0080) (80,0.0107)
	(110,0.0123) (130,0.0128) (150,0.0131)
};
\addplot[blue,mark=o,mark options=solid,mark size=3.6pt] coordinates {
(20,0.01114) (50,0.0034) (80,0.0011)
	(110,6.45e-04) (130,6.1e-04) (150,5.7e-04)
};

\addplot[black!40!green,mark=triangle,mark options=solid,mark size=4.8pt] coordinates {
(20,0.01) (50,0.0026) (80,0.0024)
	(110,0.0032) (130,0.0034) (150,0.0034)
};
\addplot[black!10!red,mark=*,mark options=solid,mark size=4pt,dashed] coordinates {
(20,0.01) (50,0.0026) (80,9.6438e-04)
	(110,3.4e-04) (130,2.31e-04) (150,9.97e-05)
};
\legend{Richardson It., {$\frac{1}{L}\sum_{i=1}^L\widehat{z}_m^{(i)}(\omega)-z$, $\tau=0.7$}, 
 {$\frac{1}{L}\sum_{i=1}^L\widehat{z}_m^{(i)}(\widehat{\omega})-z$, $\tau=0.7$}, 
  {$\frac{1}{L}\sum_{i=1}^L\widehat{z}_m^{(i)}(\omega)-z$, $\tau=0.9$}, 
   {$\frac{1}{L}\sum_{i=1}^L\widehat{z}_m^{(i)}(\widehat{\omega})-z$, $\tau=0.9$}}
\end{semilogyaxis}
\end{tikzpicture}\vspace{-.08in}
\caption{MSE of the difference between the exact solution $z$ and the approximate solutions returned by classical Richardson iteration 
(with a scalar parameter $\omega$) 
and the difference between the exact solution $z$ and the sample mean of Algorithm \ref{alg:richardson} for a model problem. The number of samples generated 
by Algorithm \ref{alg:richardson} is fixed to $L=10$.}\label{fig0000}
\end{figure}
Figure \ref{fig0000} plots the MSE of $\frac{1}{L}\sum_{i=1}^L\widehat{z}_m^{(i)}-z$ and the error 
vector $z-z_m$ as a function of $m$. Here, we fix the number of the samples $L=10$. Similar to the previous results, we can 
observe that  Algorithm \ref{alg:richardson} with the scalar parameter $\widehat{\omega}$ 
converges -in expectation- to the same approximation that classical Richardson 
generates after $m$ steps. These results are in contrast to those obtained by setting the scalar parameter 
in Algorithm \ref{alg:richardson} equal to $\omega$ which stagnates. 
As is also indicated by our analysis, Algorithm \ref{alg:richardson} can approximate 
$z_m$ more accurately (for a fixed number of trials) for small values of $m$ due to the reduced variance.

\subsection{{Straggler-tolerant} second-order iterations}

We conclude this section with an illustration of the performance 
of classical and {straggler-tolerant} Chebyshev 
semi-iterative method on two sparse problems. 
For classical Chebyshev semi-iterative method, we consider the $2\times 2$ augmented 
system in (\ref{cheb3}) while for the {straggler-tolerant} version we consider the $2\times 2$ augmented system in (\ref{cheb2}). The scalars $\eta$ and 
$\nu$ are set as suggested in Section \ref{sec:cheby} with $\alpha=0.9\lambda_1$ and 
$\beta=1.1\lambda_N$, while we used the same random initial guess for each system.

Figure \ref{fig22} plots the MSE of the difference between the approximation returned after $m$ steps of 
classical Chebyshev semi-iterative method and the approximation returned after 
{straggler-tolerant} Chebyshev semi-iterative method averaged over three separate trials, 
for the solution of a linear system with the sparse matrix problem \textsc{crystm01}. 
For {straggler-tolerant} Chebyshev semi-iterative method, we consider two separate sampling 
rates, $\tau=0.7$ and $\tau=0.9$. In agreement with the results reported for the case of 
{straggler-tolerant} Richardson, the sample mean converges to the iterates generated by classical Chebyshev semi-iterative method and  higher values of $\tau$ lead to a greater error reduction for the same value of $m$.

Figure \ref{fig11} plots the convergence of Chebyshev semi-iterative method on the same 
$N=30^3$ Laplacian model problem shown in Figure \ref{fig0000}. By comparing these two figures, we can see that Chebyshev semi-iterative method converges faster than Richardson iteration for both the classical and {straggler-tolerant} variants.

\begin{figure}
\centering
\begin{tikzpicture}
\begin{semilogyaxis}[
	xlabel={Value of $m$},
	ylabel={MSE},
        grid=both,minor tick num=1,
        every axis plot/.append style={thick},
        xtick={5,10,15,20,25,30,35,40,45,50},
        legend style={at={(0.37,0.41)},{font=\scriptsize}},
        title={{\bf Approximation of $d$ for the Problem \textsc{crystm01}}},
        legend image post style={scale=0.8},
        title style={yshift=-1.5ex,},
        title style={yshift=-.1ex,},
        width=13cm, 
        height=6cm,
]
\addplot[red,mark=square,mark options=solid,mark size=3.6pt] coordinates {
(5,0.0146) (10,0.008) (15,0.0021)	(20,0.0017) (25,0.0012) (30,4.2226e-04) 
(35,1.8903e-04) (40,1.6772e-04) (45,7.4989e-05) (50,2.6e-05)
};

\addplot[blue,mark=o,mark options=solid,mark size=3.6pt] coordinates {
(5,0.06) (10,0.04) (15,0.021) (20,0.0087) (25,0.0052) (30,0.0038) 
(35,0.002) (40,9.9e-04) (45,7.0e-04) (50,5.92e-04)
};

\addplot[black!10!red,mark=*,mark options=solid,mark size=4pt,dashed] coordinates {
(5,0.03) (10,0.01) (15,0.0085) (20,0.0055) (25,0.003) (30,0.0012) 
(35,7.8e-04) (40,4.9e-04) (45,2.6e-04) (50,1.1e-04)
};
\legend{Chebyshev 2nd order, 
 {$\frac{1}{L}\sum_{i=1}^L\widehat{d}_m^{(i)}-d$, $\tau=0.7$}, 
   {$\frac{1}{L}\sum_{i=1}^L\widehat{d}_m^{(i)}-d$, $\tau=0.9$}}
\end{semilogyaxis}
\end{tikzpicture}\vspace{-0.08in}
\caption{MSE of the difference between the exact solution $d$ and the approximation 
$d_m$ returned by classical Chebyshev semi-iterative method, and the difference between the exact solution $d$ and the sample mean 
of the $m$-step approximations returned by {straggler-tolerant} Chebyshev semi-iterative method, for the matrix problem \textsc{crystm01}.  The number of samples is fixed to $L=3$.}\label{fig22}
\end{figure}
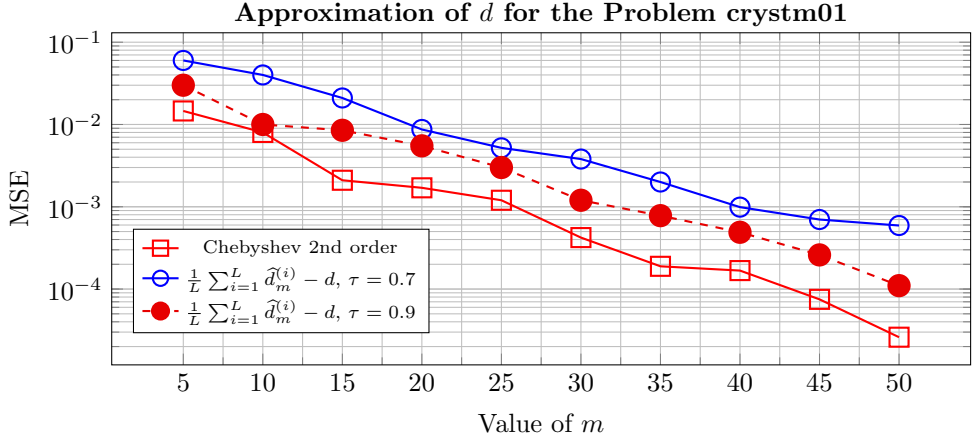

\begin{figure}
\centering
\begin{tikzpicture}
\begin{semilogyaxis}[
	xlabel={Value of $m$},
	ylabel={MSE},
        grid=both,minor tick num=1,
        every axis plot/.append style={thick},
        xtick={5,10,15,20,25,30,35,40,45,50},
        legend style={at={(0.39,0.5)},{font=\scriptsize}},
        title={{\bf Approximation of $d$ for a $N=30^3$ Laplacian Problem}},
        legend image post style={scale=0.8},
        title style={yshift=-1.5ex,},
        title style={yshift=-.1ex,},
        width=13cm, 
        height=6cm,
]
\addplot[red,mark=square,mark options=solid,mark size=3.6pt] coordinates {
(5,0.0043) (10,0.003) (15,0.0016)	(20,8.6228e-04) (25,4.3789e-04) (30,2.0766e-04) 
(35,8.9757e-05) (40,3.7211e-05) (45,1.7185e-05) (50,9.5e-06)
};

\addplot[blue,mark=o,mark options=solid,mark size=3.6pt] coordinates {
(5,0.011) (10,0.007) (15,0.0094) (20,0.0058) (25,0.0046) (30,0.0025) 
(35,0.001) (40,8.7e-04) (45,4.8e-04) (50,2.1e-04)
};

\addplot[black!10!red,mark=*,mark options=solid,mark size=4pt,dashed] coordinates {
(5,0.0073) (10,0.005) (15,0.0036) (20,0.0028) (25,9.3e-04) (30,8.1e-04) 
(35,5.4e-04) (40,2.1e-04) (45,9.2e-05) (50,4.9e-05)
};
\legend{Chebyshev 2nd order, 
 {$\frac{1}{L}\sum_{i=1}^L\widehat{d}_m^{(i)}-d$, $\tau=0.7$}, 
   {$\frac{1}{L}\sum_{i=1}^L\widehat{d}_m^{(i)}-d$, $\tau=0.9$}}
\end{semilogyaxis}
\end{tikzpicture}\vspace{-0.08in}
\caption{MSE of the difference between the exact solution $d$ and the approximation 
$d_m$ returned by classical Chebyshev semi-iterative method, and the difference between the exact solution $d$ and the sample mean 
of the $m$-step approximations returned by {straggler-tolerant} Chebyshev semi-iterative method, for a Laplacian problem.  The number of samples is fixed to $L=3$.}\label{fig11}
\end{figure}
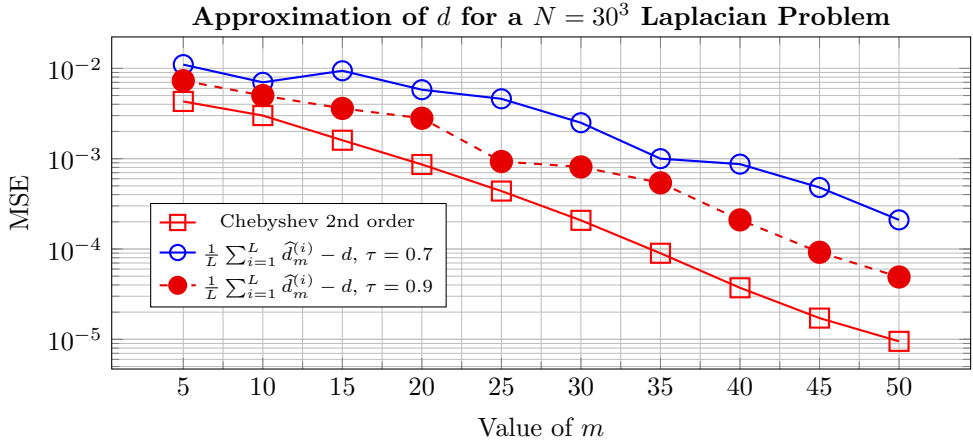

\section{Conclusion} \label{sec6}

In this paper, we considered the solution of linear systems under the 
constraint that matrix-vector products with the iteration matrix $A$ 
are performed through an oracle which returns $T\in [1,N]$ entries 
of the matrix-vector product while replacing the $N-T$ non-returned ones by 
zero. The $T$ returned entries are indexed by the row subset ${\cal T}$ 
where $|{\cal T}|=T$. By assumption, both $T$ and ${\cal T}$ are random 
variables. Our theoretical results indicate that {straggler-tolerant} Richardson iteration and Chebyshev semi-iterative method equipped 
with partial matrix-vector products can still converge -in expectation- to 
the solution obtained by classical ones if one scalar parameter 
is weighted by the scalar $N/\mathbb{E}[T]$ when ${\cal T}$ follows the uniform distribution. Numerical experiments on model and sparse problems verified the theoretical aspects of the proposed algorithms.

In our future work, we aim to investigate the convergence of {straggler-tolerant} solvers in non-uniform distribution cases and provide a more rigorous analysis of the variance in the iterates. Moreover, we plan to study the application of {straggler-tolerant} solvers as  
inner preconditioners in flexible Krylov methods. In this case, both the preconditioner and the matrix-vector product with the matrix $A$ can be computed less accurately as the approximate solution becomes more accurate, which in turn, implies that there might be room to sample from a distribution with lower $\mathbb{E}[T]$ dynamically, i.e., the framework can tolerate more straggling workers. We also plan to extend our development of straggler-tolerant linear solvers to nonlinear ones \cite{nltgcr,anderson24}. 

\section*{Acknowledgement} 
{The authors would like to thank two anonymous referees for their valuable suggestions
which greatly improve the presentation and the scope of the paper.}

\bibliographystyle{siamplain}
\bibliography{references}

\end{document}